\def\titlerunning#1{\gdef\titrun{#1}}
\def\author#1{\gdef\autrun{\def\and{\unskip, }#1}\gdef\@author{#1}}
\def\address#1{{\def\and{\\\hspace*{18pt}}\renewcommand{\thefootnote}{}%
\footnote {#1}}%
\markboth{\autrun}{\titrun}}
\def\email#1{e-mail: #1}
\def\subjclass#1{{\renewcommand{\thefootnote}{}%
\footnote{\emph{Mathematics Subject Classification (2010):} #1}}}
\def\keywords#1{\par\medskip
\noindent\textbf{Keywords.} #1}
\theoremstyle{plain}
\newtheorem{thm}{Theorem}
\theoremstyle{plain}
\newtheorem{lem}[thm]{Lemma}
\newtheorem{corol}[thm]{Corollary}
\theoremstyle{definition}
\newtheorem{defn}[thm]{Definition}
\newtheorem{openproblem}{Open Problem}
\theoremstyle{definition}
\newtheorem{ex}[thm]{Example}
\newcommand{\rank}[1]{\mathrm{rank}(#1)}
\newcommand{\maxdegree}[1]{\mathrm{\mathcal{MD}}(#1)}
\newcommand{\zrms}[4]{\mathcal{M}^0[#1;#2,#3;#4]}
\newcommand{\lb}{\langle}
\newcommand{\rb}{\rangle}
\newcommand{\gr}{\mathcal{R}}
\newcommand{\gl}{\mathcal{L}}
\newcommand{\gh}{\mathcal{H}}
\newcommand{\gj}{\mathcal{J}}
\newcommand{\gd}{\mathcal{D}}
\numberwithin{equation}{section}
\newcommand{\edge}[1]{\ar@{-}[#1]}
\newcommand{\lulab}[1]{\ar@{}[l]_<<{#1}}
\newcommand{\rulab}[1]{\ar@{}[r]^<<{#1}}
\newcommand{\ldlab}[1]{\ar@{}[l]^<<{#1}}
\newcommand{\rdlab}[1]{\ar@{}[r]_<<{#1}}
\newcommand{\node}{*+[o][F-]{ }}
\newcounter{case}
\newenvironment{case}
{\stepcounter{case} \noindent \textbf{Case~\arabic{case}:}\itshape}{}
\newcounter{subcase}[case]
\newcounter{subsubcase}[subcase]
\DeclareMathOperator{\im}{im}
\begin{document}

\titlerunning{Generating finite semigroups}
\title{The minimal number of generators of a finite semigroup}
\author{Robert D. Gray}
\date{}

\maketitle

\address{
R. D. Gray: School of Mathematics, 
University of East Anglia, UK; 
\email{Robert.D.Gray@uea.ac.uk}
}

\subjclass{Primary 20M05, 20M20; Secondary 05C25.}

\vspace{-12mm}

\begin{abstract}
The rank of a finite semigroup is the smallest number of elements required to generate the semigroup. A formula is given for the rank of an arbitrary (non necessarily regular) Rees matrix semigroup over a group. The formula is expressed in terms of the dimensions of the structure matrix, and the relative rank of a certain subset of the structure group obtained from  subgroups generated by entries in the structure matrix, which is assumed to be in Graham normal form. This formula is then applied to answer questions about minimal generating sets of certain natural families of transformation semigroups. In particular, the problem of determining the maximum rank of a subsemigroup of the full transformation monoid (and of the symmetric inverse semigroup) is considered.  

\keywords{generating sets, finite semigroups, rank, Graham--Houghton graphs.}
\end{abstract}

\section{Introduction and preliminaries}

J. M. Howie was very interested in certain problems that lie at the boundary between semigroup theory and combinatorics. In particular, he wrote numerous papers concerned with the following natural problem: given a finite semigroup $S$ what is the smallest number of elements needed to generate $S$? In semigroup theory this number is usually referred to as the \emph{rank} of the semigroup, so
\[
\rank{S} = \min \{ |A| : A \subseteq S \ \mbox{and} \ \lb A \rb = S  \}, 
\]
where $\lb A \rb$ denotes the subsemigroup generated by the set $A$. (Note that in group theory this number is usually denoted $d(G)$.) 
Howie, along with various co-authors, wrote a number of influential papers on ranks of semigroups; see for instance 
\cite{key-2,key-12,key-13,key-15}.
Since then, many more papers on ranks of semigroups have been written. 
Some recent examples where ranks of semigroups have been considered include 
\cite{Zhao2011,Fernandes2011, East2011, East2010, Araujo2009, Fernandes2009},
among others.  
Some results about semigroups with the property that all minimal generating sets have the same cardinality are proved in \cite{Doyen84, Doyen91}. 

The vast majority of papers in this area are concerned with finding minimal cardinality generating sets for certain naturally arising semigroups; most often semigroups of transformations, matrix semigroups, and more generally endomorphism monoids of various natural combinatorial or algebraic structures. For some of these examples one may observe that the methods for determining the rank in one class of examples are not so different from those used for another class. For instance, the steps required for computing the ranks of the proper two-sided ideals of the full transformation monoid $T_n$, the symmetric inverse semigroup $I_n$, and the partial transformation monoid $P_n$ all follow more or less the same pattern; see \cite{MR92c:20123,key-2,key-13}. Thus it is natural to ask whether some general theory of ranks of finite semigroups might be developed which could then be applied to 
examples like these.

This is the central theme of this paper: to give some abstract theory regarding ranks of finite semigroups and (perhaps more importantly) to explain how this theory can be usefully applied to compute the ranks of various concrete examples of transformation semigroups. Part of the motivation for the paper is to try and bring to the attention 
of researchers working in this area
some general tools that can be useful for 
finding minimal generating sets 
of transformation semigroups
in certain situations.

The general idea is as follows. Let $S$ be a finite semigroup. Recall that the principal factors of $S$ (which may be thought of as the basic building blocks of $S$) are obtained by taking a $\gj$-class $J$ and forming a semigroup $J^* = J \cup \{ 0 \}$ with multiplication given by
\[
s \cdot t =
\begin{cases}
st & \mbox{if $s, t, st \in J$} \\
0 & \mbox{otherwise},
\end{cases}
\]
and, of course, $0s=s0=0$ for all $s \in J^*$. If $S$ is finite then every principal factor $J^*$ is either a semigroup with zero multiplication (when $J$ contains no idempotents) or $J^*$ is a completely $0$-simple semigroup and hence, by the Rees theorem, is isomorphic to some regular Rees matrix semigroup $\zrms{G}{I}{\Lambda}{P}$ over a group $G$ with structure matrix $P$.
(For background on basic concepts from semigroup theory such as Green's relations and Rees matrix semigroups we refer the reader to \cite{key-51}.) 

Let $J_1, \ldots, J_m$ be the maximal $\gj$-classes of $S$. Now if $A \subseteq S$ generates $S$ then for each $i$, every element of $J_i$ must be expressible as a product of elements from $A \cap J_i$. In other words, $A \cap J_i$ is a generating set for the principal factor $J_i^*$. This immediately implies that
\[
\rank{S} \geq \sum_{i=1}^m \rank{J_i^*}, \tag{$\dagger$}
\]
where $J_1, \ldots, J_m$ are the maximal $\gj$-classes of $S$. 
When $J_i^*$ is a semigroup with zero multiplication then clearly $\rank{J_i^*} = |J_i|$. On the other hand, when $J_i$ contains an idempotent, $J_i^*$ will be isomorphic to some regular Rees matrix semigroup $\zrms{G}{I}{\Lambda}{P}$ over a group $G$ with structure matrix $P$. This explains why the starting point for the development of any general theory of minimal generating sets of finite semigroups must be to try and understand minimal generating sets of Rees matrix semigroups. 
Of course in general ($\dagger$) will not be an equality, but in many natural examples it is. 
For instance, 
the examples mentioned above, namely the proper ideals of $T_n$, $I_n$ and $P_n$ all fall into this category. Also, even when ($\dagger$) is not an equality the question
of determining the ranks of the maximal $J_i^*$ remains relevant since 
 any minimal generating set for $S$ must contain minimal generating sets for each of the principal factors of the maximal $\gj$-classes.

The paper is structured as follows. In \S2, generalising slightly the main result of \cite{Gray2005}, a formula will be presented which gives the rank of an arbitrary (not necessarily regular) Rees matrix semigroup over a group, given in terms of the underlying group $G$ and the structure matrix $P$. 
In order to understand this formula we shall first need to go through the basics of the theory of Graham normalization for Rees matrix semigroups. 
We shall then go on to see how this formula can be usefully applied to investigate questions about minimal generating sets of certain semigroups of transformations.  In particular in \S3 we shall make some preliminary investigations into the following natural problem: given a transformation semigroup $S$ on $n$ points, how many generators (in the worst case) are needed to generate $S$? In other words, what is
$
\max\{ \rank{S} : S \leq T_n \}?
$
This is a classical question for the symmetric group $S_n$ in the theory of finite permutation groups where it is known that every group $G \leq S_n$ ($n>3$) is generated by at most $\lfloor n / 2 \rfloor$ elements; see \cite{McIverNeumann1987}. 

Throughout, since we shall mainly be considering semigroups that have a zero element,  we will always include the zero in any given subsemigroup. As a consequence of this, by $\lb X \rb$ we will mean all the elements that can be written as products of elements of $X$, plus zero if necessary. We use $E(S)$ to denote the set of idempotents in a semigroup $S$.

\section{Graham--Houghton graphs and Graham normal form}

As part of the more advanced topics covered in the recent monograph on finite semigroup theory by Rhodes and Steinberg  is a section devoted to R. Graham's description of the idempotent generated subsemigroup of a $0$-simple semigroup; see
\cite[Section~4.13]{SteinbergBook2009}. Their motivation for including this material comes from the fact that a detailed study of the idempotent generated subsemigroup of a finite semigroup 
can be important for understanding complexity. 
Graham's theorem describes the idempotent generated subsemigroup of a Rees matrix semigroup. These results were later rediscovered by Houghton who gave them a topological interpretation \cite{Houghton1977}. Graham--Houghton graphs, and $2$-complexes, have also recently arisen as an important tool in the study of, so-called, free idempotent generated semigroups; see for example
\cite{Brittenham2009, DolinkaGray, Easdown2010, Gray2012(1), Gray2012(2)}.

Something which is possibly less commonly known is the importance of Graham's ideas when one is interested in finding small generating sets for finite semigroups. The connection comes from the fact that in many natural examples minimal generating sets may be found by taking a disjoint union of minimal generating sets for a collection $J_i^* (i \in I)$ of principal factors of $S$, and so the problem comes down to finding minimal generating sets for the corresponding Rees matrix semigroups. This, in turn, may be reduced to questions about the maximal subgroups when  developing Graham's ideas in the appropriate way. 

As in \cite{SteinbergBook2009} we find here that for applications 
we shall benefit from considering 
arbitrary Rees matrix semigroups over groups, and not just regular ones. 
So throughout by a Rees matrix semigroup over a group we shall mean an arbitrary (non-necessarily regular) Rees matrix semigroup.

\subsection{Graham--Houghton graphs and Graham's Theorem} 
Throughout we shall, for the most part, follow the notation and conventions of \cite[Section~4.13]{SteinbergBook2009}.
A \emph{graph} (in the sense of Serre \cite{SerreTrees}) consists of a set $V$ of vertices, a set $E$ of edges and three functions
\[
\iota: E \rightarrow V, \quad
\tau: E \rightarrow V, \quad
{}^{\overline{\ \ }}:E \rightarrow E,
\]
called initial, terminal and inverse (respectively),
where $e \mapsto \overline{e}$ is a fixed point free involution satisfying $\iota e = \tau \overline{e}$ and $\tau e = \iota \overline{e}$.  
A \emph{non-empty path} $p$ will consist of a sequence of edges $e_1 e_2 \ldots e_n$ with $\tau e_i = \iota e_{i+1}$ $(1 \leq i < n)$. The initial, terminal and involution functions extend naturally to paths by defining 
\[
\iota p = \iota e_1, \quad
\tau p = \tau e_n, \; \mbox{and} \; \;
\overline{p} = \overline{e_n} \ldots  \overline{e_2} \; \overline{e_1}.
\]
We admit an empty path $1_v$ at each vertex $v$.
If $p$ and $q$ are paths with $\tau p = \iota q$ then we can form the \emph{product path} $p q$ consisting of the edges of $p$ followed by the edges of $q$. 
A graph is \emph{connected} if any two vertices can be connected by a path, and the connected components are the maximal connected subgraphs. Pairs $\{ e, \overline{e} \}$ are called \emph{geometric edges}, and an \emph{orientation} of $\Gamma$ is given by choosing a representative from each geometric edge. The chosen edge is said to be \emph{positively oriented}. A bipartite graph admits two natural orientations, namely orienting geometric edges to point at one part or the other of the bipartition. 
\begin{defn}[$G$-labelled graph] Let $\Gamma$ be a graph and $G$ be a group. A \emph{$G$-labelling} of $\Gamma$ is a map $l: E \rightarrow G$ such that $l \overline{e} = (l e)^{-1}$. We call $(\Gamma,l)$ a $G$-labelled graph.
\end{defn}
Of course, given a $G$-labelled graph $(\Gamma,l)$, the labelling map $l$ extends in an obvious way to paths by setting 
$
l(e_1 \ldots e_n) = (l e_1)(l e_2) \ldots (l e_n), 
$
and we clearly have $(l \overline{p}) = (l p)^{-1}$. There is a homotopy theory for $G$-labelled graphs that lies in the background of some of the results presented here. We shall not go into the details of this here, the interested reader should consult \cite[Section~4.13]{SteinbergBook2009}.

\begin{defn}[Graham--Houghton graph of a Rees matrix semigroup]\label{def_GH}
Let $S = \zrms{G}{I}{\Lambda}{P}$ be a Rees matrix semigroup over a group $G$ with structure matrix $P = (p_{\lambda i})$, where we assume $I \cap \Lambda$ is empty. The Graham--Houghton graph of $S$ (also called the incidence graph of $S$) denoted $\Gamma(S)$, has vertex set $V = I \cup \Lambda$ and edge set 
\[
E = \{ 
(i, \lambda), (\lambda, i) : i \in I, \; \lambda \in \Lambda, \; p_{\lambda i} \neq 0 
\}.
\]
The involution is given by $\overline{(x,y)} = (y,x)$, and $\iota(x,y) = x$ and $\tau(x,y)=y$. 

The structure matrix $P$ gives $\Gamma(S)$ the structure of a $G$-labelled graph $(G,l_P)$ by defining $l_P(i,\lambda) = p_{\lambda i}^{-1}$ and $l_P(\lambda,i) = p_{\lambda i}$ for $i \in I, \lambda \in \Lambda$. We orient $\Gamma(S)$ by taking edges in $\Lambda \times I$ as the set of positively oriented edges. 
\end{defn}

Throughout we shall use $\Gamma(S)$ to denote the Graham--Houghton graph (without labels), and  $(\Gamma(S),l_P)$ to denote the corresponding $G$-labelled graph. 
Graham's fundamental observation was that the oriented $G$-labelled graph $(\Gamma(S),l_P)$ encodes the idempotent generated subsemigroup of $S = \zrms{G}{I}{\Lambda}{P}$. Before presenting his result, we first make a few basic observations about the unlabelled graph $\Gamma(S)$. 

\

\noindent (i) There is a natural bijection between the geometric edges of $\Gamma(S)$ and the non-zero idempotents in $S$ given by $(i,\lambda) \leftrightarrow (i, p_{\lambda i}^{-1}, \lambda)$. 

\vspace{2mm}

\noindent (ii) When $S$ is $0$-simple (i.e. when the matrix $P$ is regular) the isomorphism type of the graph $\Gamma(S)$ only depends on the isomorphism type of $S$ and not on the choice of Rees matrix representation. This is because in this situation the graph $\Gamma(S)$ simply records those $\gh$-classes in the non-zero $\gd$-class of $S$ that contain idempotents (that is, the group $\gh$-classes). 

\vspace{2mm}

\noindent (iii) $S$ being $0$-simple is equivalent to saying that $\Gamma(S)$ has no isolated vertices. 

\

Let $\mathcal{P}_{i,\lambda}$ denote the set of all paths in $\Gamma(S)$ from $i$ to $\lambda$. Then $(\Gamma(S),l_P)$ can be used in a direct way to describe the idempotent generated part of $S$ as the following  result shows. 
\begin{thm}\label{thm_GrahamHowie}
Let $S = \zrms{G}{I}{\Lambda}{P}$ be a Rees matrix semigroup. Then
\[
\lb E(S) \rb =
\{
(i, l_P(\pi), \lambda) : i \ \mbox{and} \ \lambda \ \mbox{belong to the same connected component of $\Gamma(S)$ and $\pi \in \mathcal{P}_{i,\lambda}$}
\} \cup \{ 0 \}. 
\]
\end{thm}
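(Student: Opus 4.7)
The plan is to unpack the Rees matrix multiplication on non-zero idempotents and show that it mirrors path concatenation in $\Gamma(S)$ exactly. By observation (i) in the preamble, the non-zero idempotents of $S$ are parametrised by the geometric edges of $\Gamma(S)$: set $f_{i,\lambda} = (i, p_{\lambda i}^{-1}, \lambda)$ for each pair with $p_{\lambda i} \neq 0$. The zero element lies in both sides of the claimed equality automatically (under our standing convention that $0$ belongs to every subsemigroup), so the content is the non-zero part.

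For the $(\subseteq)$ inclusion, take a non-zero product $f_{i_1,\lambda_1} f_{i_2,\lambda_2} \cdots f_{i_n,\lambda_n}$. Using the Rees product rule $(i,g,\lambda)(j,h,\mu) = (i, g p_{\lambda j} h, \mu)$ when $p_{\lambda j} \neq 0$ (and $0$ otherwise), non-vanishing is equivalent to $p_{\lambda_k i_{k+1}} \neq 0$ for every $1 \leq k < n$, i.e.\ each $(\lambda_k, i_{k+1})$ is an edge of $\Gamma(S)$. Combined with the positively oriented edges $(i_k, \lambda_k)$ that are present by virtue of $p_{\lambda_k i_k} \neq 0$, this data assembles into a path
\[
\pi = (i_1, \lambda_1)(\lambda_1, i_2)(i_2, \lambda_2) \cdots (\lambda_{n-1}, i_n)(i_n, \lambda_n) \in \mathcal{P}_{i_1, \lambda_n}.
\]
An induction on $n$ shows that the middle coordinate of the product telescopes to
\[
p_{\lambda_1 i_1}^{-1}\, p_{\lambda_1 i_2}\, p_{\lambda_2 i_2}^{-1}\, p_{\lambda_2 i_3} \cdots p_{\lambda_{n-1} i_n}\, p_{\lambda_n i_n}^{-1},
\]
which is precisely $l_P(\pi)$ by the definition of the labelling. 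In particular, $i_1$ and $\lambda_n$ lie in a common connected component of $\Gamma(S)$.

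For the $(\supseteq)$ inclusion, fix $i \in I$ and $\lambda \in \Lambda$ in the same component together with an arbitrary $\pi \in \mathcal{P}_{i,\lambda}$. Since $\Gamma(S)$ is bipartite with parts $I$ and $\Lambda$ and all edges cross the bipartition, the path $\pi$ has odd length and its edges alternate between positively and negatively oriented edges; consequently $\pi$ factorises as $(i_1, \lambda_1)(\lambda_1, i_2) \cdots (i_n, \lambda_n)$ with $i_1 = i$ and $\lambda_n = \lambda$. Each $(i_k, \lambda_k)$ being an edge makes the idempotent $f_{i_k, \lambda_k}$ well-defined, and each $(\lambda_k, i_{k+1})$ being an edge guarantees that the product $f_{i_1,\lambda_1} \cdots f_{i_n,\lambda_n}$ is non-zero; by the forward computation it equals $(i, l_P(\pi), \lambda)$, as required. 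The only real obstacle in the write-up is the routine but bookkeeping-heavy verification of the telescoping identity; in particular one should note that paths are allowed to backtrack (a positively oriented edge followed immediately by its inverse), but this causes no trouble because the corresponding factor $f_{i_k, \lambda_k}$ simply appears twice in succession, and $f_{i_k, \lambda_k}^2 = f_{i_k, \lambda_k}$ matches the fact that the label $p_{\lambda_k i_k}^{-1} p_{\lambda_k i_k}$ of a backtrack cancels.
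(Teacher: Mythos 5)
Your argument is correct and is precisely the ``straightforward correspondence between non-zero products of idempotents of $S$ and values of labels of paths from $I$ to $\Lambda$'' that the paper invokes (deferring the details to Howie's paper \cite{key-4}): non-vanishing of a product of idempotents matches the edge conditions for a path, and the middle coordinate telescopes to the path label. The only quibble is your closing remark on backtracking, which describes just one shape of backtrack and is in any case unnecessary, since your telescoping identity already holds for arbitrary (possibly backtracking) paths.
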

This result follows from the straightforward correspondence between non-zero products of idempotents of $S$ and values of labels of paths from $I$ to $\Lambda$ in $(\Gamma(S),l_P)$; for a detailed proof see, for instance, 
\cite{key-4}. 

\subsection{Isomorphism theorem and Graham normal form}

Now, at this stage it is not clear how far Theorem~\ref{thm_GrahamHowie} takes us, since computing all values of all possible paths in $\Gamma(S)$ would seem just as involved as computing by hand what the idempotents generate in the first place. Also, it is not yet clear what any of this has to do with finding small generating sets for Rees matrix semigroups. 

The answer to the first of these questions is given by a result of Graham which we now describe. The starting point is to recall that two Rees matrix semigroups $\zrms{G}{I}{\Lambda}{P}$ and $\zrms{G}{I}{\Lambda}{Q}$ over the same group $G$ and with the same index sets $I$ and $\Lambda$ may be isomorphic even if $P \neq Q$. Of course, basic operations like permuting rows and columns of $P$ will not change the isomorphism type, but more interesting transformations of $P$ can be carried out. The process of changing $P$ while leaving the isomorphism type of $S$ unchanged is known as \emph{normalization}. 
For instance, for regular Rees matrix semigroups we have the following well-known result. 
\begin{thm}\cite[Theorem~3.4.1]{key-51}\label{isomorphismtheorem}
Two regular Rees matrix semigroups $S = \zrms{G}{I}{\Lambda}{P}$ and $T =
\zrms{K}{J}{M}{Q}$ are isomorphic if and only if there exist an isomorphism $\theta: G
\rightarrow K$, bijections $\psi:I \rightarrow J$, $\chi:\Lambda \rightarrow M$ and
elements $u_i \ (i \in I), \nu_{\lambda} \ (\lambda \in \Lambda)$ such that
$
p_{\lambda i} \theta = \nu_{\lambda} q_{\lambda \chi, i \psi} u_i
$
for all $i \in I$ and $\lambda \in \Lambda$.
\end{thm}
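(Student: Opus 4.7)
The plan is to prove the two directions separately: sufficiency by an explicit construction, and necessity by extracting the data $(\theta, \psi, \chi, \{u_i\}, \{\nu_\lambda\})$ from a given isomorphism.

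For sufficiency, given the data, I define $\Phi: S \to T$ by $\Phi(0) = 0$ and $\Phi(i, g, \lambda) = (i\psi, u_i (g\theta) \nu_\lambda, \lambda\chi)$. This is a bijection because $\theta, \psi, \chi$ are bijections and $u_i, \nu_\lambda$ are invertible in $K$. To check it is a homomorphism, I compare $\Phi((i,g,\lambda)(j,h,\mu))$ with $\Phi(i,g,\lambda)\Phi(j,h,\mu)$: the compatibility identity $p_{\lambda j}\theta = \nu_\lambda q_{\lambda\chi, j\psi} u_j$ forces the two to coincide in the non-zero case, and extending this identity by $0\theta = 0$ gives $p_{\lambda j} = 0 \iff q_{\lambda\chi, j\psi} = 0$, handling the zero case.

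For necessity, suppose $\Phi: S \to T$ is an isomorphism. Since $\Phi$ preserves Green's relations and (as $S$, $T$ are completely $0$-simple) the non-zero $\gr$- and $\gl$-classes of $S$ are indexed by $I$ and $\Lambda$ (and similarly for $T$), I extract bijections $\psi: I \to J$ and $\chi: \Lambda \to M$ with $\Phi(H_{i,\lambda}) = H_{i\psi, \lambda\chi}$; thus $\Phi(i, g, \lambda) = (i\psi, \phi_{i,\lambda}(g), \lambda\chi)$ for bijections $\phi_{i,\lambda}: G \to K$. Regularity of $P$ gives reference indices $i_0, \lambda_0$ with $p_{\lambda_0 i_0} \neq 0$, so $H_{i_0, \lambda_0}$ is a group $\gh$-class with identity $(i_0, p_{\lambda_0 i_0}^{-1}, \lambda_0)$, and $\Phi$ restricts to a group isomorphism onto the group $\gh$-class $H_{i_0\psi, \lambda_0\chi}$ whose identity is $(i_0\psi, q_{\lambda_0\chi, i_0\psi}^{-1}, \lambda_0\chi)$. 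From this restriction I read off a group isomorphism $\theta: G \to K$, normalised so that $\phi_{i_0, \lambda_0}(g) = g\theta$ (equivalently, $u_{i_0} = \nu_{\lambda_0} = 1$ and $p_{\lambda_0 i_0}\theta = q_{\lambda_0\chi, i_0\psi}$ automatically from the idempotent constraint). I then set $u_i := \phi_{i, \lambda_0}(1)$ and $\nu_\lambda := \phi_{i_0, \lambda}(1)$.

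The central computation is to check that $\phi_{i, \lambda}(g) = u_i (g\theta) \nu_\lambda$ for \emph{all} $i, \lambda, g$, and then to derive the compatibility identity. For the former, I apply $\Phi$ to the factorisation $(i, g, \lambda) = (i, 1, \lambda_0) \cdot (i_0, p_{\lambda_0 i_0}^{-1} g p_{\lambda_0 i_0}^{-1}, \lambda_0) \cdot (i_0, 1, \lambda)$ (valid since $p_{\lambda_0 i_0} \neq 0$), expanding the right-hand side as a product of three triples in $T$ and simplifying using the normalisation $p_{\lambda_0 i_0}\theta = q_{\lambda_0\chi, i_0\psi}$; all the intermediate $q_{\lambda_0\chi, i_0\psi}$-factors cancel to leave $u_i(g\theta)\nu_\lambda$. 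For the identity, apply $\Phi$ to the idempotent $(i, p_{\lambda i}^{-1}, \lambda)$ when $p_{\lambda i} \neq 0$: it must land on the unique idempotent $(i\psi, q_{\lambda\chi, i\psi}^{-1}, \lambda\chi)$ of $H_{i\psi, \lambda\chi}$, and matching this with $(i\psi, u_i (p_{\lambda i}^{-1})\theta \nu_\lambda, \lambda\chi)$ yields $p_{\lambda i}\theta = \nu_\lambda q_{\lambda\chi, i\psi} u_i$. The case $p_{\lambda i} = 0$ forces $q_{\lambda\chi, i\psi} = 0$ because $\Phi$ restricts to a bijection on idempotents, so both sides vanish under the convention $0\theta = 0$.

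The main obstacle is the bookkeeping in the propagation step: establishing that the locally chosen normalisation at the reference pair $(i_0, \lambda_0)$ automatically produces a globally consistent formula $\phi_{i,\lambda}(g) = u_i(g\theta)\nu_\lambda$. This is where one must be careful with the non-commutativity of $G$ and $K$ and with the conventions for handling zero entries of the structure matrices; the key technical input is the factorisation of an arbitrary $(i, g, \lambda)$ through the reference group $\gh$-class, which is the one place where regularity of $P$ is used in an essential way.
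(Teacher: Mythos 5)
The paper gives no proof of this statement at all: it is quoted directly from Howie's book (Theorem~3.4.1 of \cite{key-51}), so there is no in-paper argument to compare against. Your proposal follows the standard textbook route --- sufficiency by writing down the map $(i,g,\lambda)\mapsto(i\psi,\,u_i(g\theta)\nu_\lambda,\,\lambda\chi)$ and checking it directly, necessity by tracking Green's classes and factoring an arbitrary element through a reference group $\mathcal{H}$-class --- and the sufficiency half, the extraction of $\psi$ and $\chi$, and the idempotent argument for the compatibility identity (including the zero entries) are all correct.

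One step fails as literally stated, namely the normalisation ``$\phi_{i_0,\lambda_0}(g)=g\theta$, equivalently $u_{i_0}=\nu_{\lambda_0}=1$''. Write $p=p_{\lambda_0 i_0}$, $q=q_{\lambda_0\chi,\,i_0\psi}$ and $\phi=\phi_{i_0,\lambda_0}$. The homomorphism property of $\Phi$ on the reference group $\mathcal{H}$-class gives $\phi(gph)=\phi(g)\,q\,\phi(h)$ for all $g,h\in G$, so the map $g\theta:=q\,\phi(p^{-1}g)$ is a genuine group isomorphism $G\to K$ and one computes $\phi(g)=\phi(1_G)\cdot(g\theta)$. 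Hence $\phi$ itself is multiplicative, i.e.\ can serve as $\theta$, only when $\phi(1_G)=1_K$; this is not automatic, because the identity of $H_{i_0,\lambda_0}$ is $(i_0,p^{-1},\lambda_0)$ rather than $(i_0,1_G,\lambda_0)$, and the ``idempotent constraint'' you invoke yields $\phi(p^{-1})=q^{-1}$, not $\phi(1_G)=1_K$. Concretely, with your choices $u_i=\phi_{i,\lambda_0}(1)$ and $\nu_\lambda=\phi_{i_0,\lambda}(1)$, evaluating the target formula at $(i_0,\lambda_0,1_G)$ forces $\phi(1_G)=\phi(1_G)^2$, so the normalisations you impose are mutually consistent only in the case $\phi(1_G)=1_K$. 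The repair is one line: keep $\theta(g)=q\,\phi(p^{-1}g)$ and $u_i=\phi_{i,\lambda_0}(1)$, but set $\nu_\lambda=(p\theta)^{-1}q\,\phi_{i_0,\lambda}(1)$ (equivalently, absorb the constant $\phi(1_G)$ on one side). Your factorisation $(i,g,\lambda)=(i,1,\lambda_0)(i_0,p^{-1}gp^{-1},\lambda_0)(i_0,1,\lambda)$ then gives $\phi_{i,\lambda}(g)=u_i(g\theta)\nu_\lambda$ exactly as intended, and the rest of your argument goes through unchanged.
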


Roughly speaking Graham's result shows that for an arbitrary Rees matrix semigroup, the structure matrix
$P$ may be normalized in such a was that $\lb E(S) \rb$ can be computed just using information about the non-zero elements in $P$, and the subgroups of $G$ that they generate. This is a huge step forward, since it means $\lb E(S) \rb$ can be determined without having to consider all values of all possible paths in $(\Gamma(S),l_P)$. In more detail, if $S$ is a Rees matrix semigroup then we can normalize the matrix in
a special way using the graph $\Gamma(S)$.

\begin{thm}{\cite[Theorem~4.13.11]{SteinbergBook2009}}
\label{identityspanningtree}
Let $S = \zrms{G}{I}{\Lambda}{P}$ be a Rees matrix semigroup and let $\Gamma(S)$ be its Graham--Houghton graph. For any spanning forest $\mathcal{F}$ of $\Gamma(S)$ it is possible to normalize $S$ to obtain $U = \zrms{G}{I}{\Lambda}{Q} \cong \zrms{G}{I}{\Lambda}{P}$ such that $\Gamma(U) \cong \Gamma(S)$ (via the identity map on $I \cup \Lambda$) and $l_Q(e) = 1_G$ (the identity element of $G$) for every edge $e$ in the spanning forest $\mathcal{F}$. 
\end{thm}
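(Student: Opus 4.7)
The plan is to construct the normalized matrix $Q$ explicitly by assigning rescaling scalars $u_i \in G$ $(i \in I)$ and $v_\lambda \in G$ $(\lambda \in \Lambda)$ greedily along the edges of the spanning forest $\mathcal{F}$, working outward from chosen roots in each tree.

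First I would establish an arbitrary-Rees-matrix-semigroup analogue of Theorem~\ref{isomorphismtheorem}: for any elements $u_i, v_\lambda \in G$, if $Q = (q_{\lambda i})$ is defined by $q_{\lambda i} = v_\lambda p_{\lambda i} u_i$ (with the usual convention that $v_\lambda \cdot 0 \cdot u_i = 0$), then the map
\[
\phi : S \to U = \zrms{G}{I}{\Lambda}{Q}, \qquad (i, g, \lambda) \mapsto (i, u_i^{-1} g v_\lambda^{-1}, \lambda), \quad 0 \mapsto 0,
\]
is an isomorphism. This is a direct calculation using the Rees matrix product $(i,g,\lambda)(j,h,\mu) = (i, g p_{\lambda j} h, \mu)$, noting that $v_\lambda^{-1} q_{\lambda j} u_j^{-1} = p_{\lambda j}$ by construction. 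Crucially, $Q$ has the same zero pattern as $P$, so $\Gamma(S)$ and $\Gamma(U)$ coincide as unlabelled graphs under the identity map on $I \cup \Lambda$.

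With this tool in hand, I would define the scalars tree by tree in $\mathcal{F}$. For each tree $T \subseteq \mathcal{F}$, pick a root $r$ and set its scalar ($u_r$ or $v_r$, depending on which side $r$ lies) equal to $1_G$. Then perform a breadth-first traversal of $T$: whenever the traversal reaches an as-yet-unscaled vertex $w$ via a tree edge from a parent $w'$ whose scalar is already fixed, choose the scalar at $w$ to be the unique group element forcing the new label $l_Q$ on this tree edge to equal $1_G$. For instance, if the tree edge is $(\lambda, i)$ with $\lambda$ already processed and $i$ newly reached, then the equation $q_{\lambda i} = v_\lambda p_{\lambda i} u_i = 1_G$ determines $u_i = p_{\lambda i}^{-1} v_\lambda^{-1}$; this is well-defined because the presence of $(\lambda, i)$ in $E(\Gamma(S))$ forces $p_{\lambda i} \neq 0$. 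Isolated vertices of $\Gamma(S)$ appear as singleton trees in $\mathcal{F}$, and I would simply assign them scalar $1_G$; they impose no constraints.

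The construction works precisely because $\mathcal{F}$ is acyclic: each non-root vertex has a unique parent in $\mathcal{F}$, giving exactly one equation pinning down its scalar, so no over-determination occurs and no consistency check is needed. The only real step requiring care is the generalization of Theorem~\ref{isomorphismtheorem} to the non-regular case, but this is a routine direct verification of the homomorphism property, after which the tree traversal delivers $Q$ with $l_Q(e) = 1_G$ for every $e \in \mathcal{F}$, as required.
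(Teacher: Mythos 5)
Your proposal is correct and is the standard normalization argument for this result; the paper itself gives no proof, simply citing \cite[Theorem~4.13.11]{SteinbergBook2009}, and your construction (rescale rows and columns via $q_{\lambda i}=v_\lambda p_{\lambda i}u_i$, check the isomorphism $(i,g,\lambda)\mapsto(i,u_i^{-1}gv_\lambda^{-1},\lambda)$ directly, then fix the scalars by a root-to-leaf traversal of each tree of $\mathcal{F}$) is exactly the argument behind the cited theorem. The key points — that the zero pattern of $P$ is preserved, and that acyclicity of $\mathcal{F}$ makes the greedy assignment consistent — are both present, so nothing is missing.
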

Graham \cite{key-29} was the first to realise that the structure matrix of a Rees matrix semigroup may be
normalized in this special way. Once this process has been carried out we say that the matrix has been put into \emph{Graham normal form}. Graham realised that once the structure matrix has been normalized in this way, a very nice description of the idempotent generated subsemigroup of $S$ may then be given. 

\begin{thm}\cite[Theorem~2]{key-29}\label{GNF} 
Let $S = \zrms{G}{I}{\Lambda}{P}$ be a
  Rees matrix semigroup, and let $\Gamma(S)$ be the Graham--Houghton graph of $S$. 
Let $I'$ and $\Lambda'$ be the respective sets of isolated vertices of $\Gamma(S)$ from $I$ and $\Lambda$. 
Let $(I_1 \cup \Lambda_1), \ldots, (I_n \cup \Lambda_n)$ be the connected components of 
$(I \setminus I') \cup (\Lambda \setminus \Lambda')$.   
Then there is a $\Lambda' \times I'$ zero matrix $C_N$ and a regular Rees matrix 
$C_R: (\Lambda \setminus \Lambda') \times (I \setminus I') \rightarrow G^0$ such that:
\begin{enumerate}
\item $S \cong \zrms{G}{I}{\Lambda}{C_R \oplus C_N}$ where 
$
C_R \oplus C_N = 
\begin{pmatrix}
C_R & 0 \\
O & C_N
\end{pmatrix};
$
\item The matrix $C_R$ is block diagonal of the form 
\[
C_R = 
\begin{pmatrix}
C_1 	& 		0 		& 		\ldots 		& 		0 			\\
0		&		C_2	&		\ldots		&		\vdots   	\\
\vdots 	&		\vdots	&		\ddots		&		\vdots		\\
0		&		\ldots	&		0			&		C_n
\end{pmatrix}
\]
where $C_i : \Lambda_i \times I_i \rightarrow G^0$ is a connected regular Rees matrix semigroup over $G^0$ for $i=1,\ldots,n$;
\item $\langle E(S) \rangle = \bigcup_{i=1}^n{ \zrms{G_i}{I_i}{\Lambda_i}{C_i}}$, where $G_i$ is the subgroup generated by the non-zero entries of $C_i$. 
\end{enumerate}
\end{thm}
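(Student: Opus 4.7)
The plan is to apply Theorem~\ref{identityspanningtree} using a spanning forest consisting of one spanning tree $T_k$ per non-trivial connected component $(I_k \cup \Lambda_k)$, together with trivial one-vertex trees on the isolated vertices $I' \cup \Lambda'$. This produces an isomorphic Rees matrix semigroup $U = \zrms{G}{I}{\Lambda}{Q}$ with the same underlying Graham--Houghton graph as $S$ (so the zero pattern of $Q$ equals that of $P$) and with $l_Q(e) = 1_G$ on every forest edge. Permuting rows and columns to list first the indices of component $1$, then $2, \ldots, n$, and finally the isolated indices $\Lambda'$ and $I'$, re-indexes $Q$ into exactly the block shape asserted in (1) and (2): off-diagonal blocks vanish because no edge of $\Gamma(S)$ can join two different components or an isolated vertex to a non-isolated one; the $\Lambda' \times I'$ block $C_N$ is trivially zero; and each diagonal block $C_k : \Lambda_k \times I_k \to G^0$ is a regular Rees matrix, since no vertex of its component is isolated in $\Gamma(S)$ and hence every row and every column of $C_k$ contains a non-zero entry.

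For (3), fix a component $k$ and indices $i \in I_k$, $\lambda \in \Lambda_k$. By Theorem~\ref{thm_GrahamHowie}, the intersection of $\lb E(U) \rb$ with the $\gh$-class $\{i\} \times G \times \{\lambda\}$ equals $\{i\} \times \{ l_Q(\pi) : \pi \in \mathcal{P}_{i,\lambda} \} \times \{\lambda\}$. Let $\pi_0$ be the unique path in $T_k$ from $i$ to $\lambda$, whose label is $1_G$. For any $\pi \in \mathcal{P}_{i,\lambda}$ the concatenation $\pi \overline{\pi_0}$ is a closed walk at $i$ with $l_Q(\pi) = l_Q(\pi \overline{\pi_0})$, and conversely every closed walk $\gamma$ at $i$ yields the path $\gamma \pi_0 \in \mathcal{P}_{i,\lambda}$ of the same label. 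Hence $\{ l_Q(\pi) : \pi \in \mathcal{P}_{i,\lambda} \}$ coincides with the set of labels of closed walks at $i$ in component $k$, which by a standard fundamental-group argument is the subgroup of $G$ generated by the labels of the fundamental cycles, one for each non-forest edge $e$ of the component; and each such label is $l_Q(e)^{\pm 1}$ because the remainder of its fundamental cycle lies in $T_k$ and carries label $1_G$. Since forest labels are trivial, this subgroup equals $G_k$, the subgroup generated by all non-zero entries of $C_k$. Thus the $\gh$-cell of $\lb E(U) \rb$ at $(i,\lambda)$ matches the corresponding $\gh$-cell of $\zrms{G_k}{I_k}{\Lambda_k}{C_k}$; taking the union over $(i,\lambda)$ in all components---and using Theorem~\ref{thm_GrahamHowie} to see that pairs $(i,\lambda)$ in different components or at isolated vertices contribute only $0$---yields (3).

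The main technical step is the identification of the path-label set with $G_k$. The easy direction (closed-walk labels form a subgroup of $G$ containing each non-forest edge label) is immediate from path concatenation, while the reverse inclusion rests on the fundamental-cycle decomposition inside a connected graph. This decomposition is usable here only because Theorem~\ref{identityspanningtree} has killed all forest labels; without that normalization, arbitrary tree labels would pollute every fundamental cycle and the clean identification with the subgroup generated by the matrix entries would fail. The block decomposition of $Q$ and the separation of isolated from non-isolated vertices are then a matter of organising indices.
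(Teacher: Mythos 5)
The paper does not prove this result---it is quoted as Graham's theorem from \cite{key-29}---so the only comparison available is with the machinery the paper assembles around it, namely Theorem~\ref{identityspanningtree} and Theorem~\ref{thm_GrahamHowie}, and your proof is exactly the intended derivation from those two results: normalize along a spanning forest with one tree per non-trivial component, note that the block shape in (1)--(2) is forced by the zero pattern of $\Gamma(S)$, and identify the path-label set $\{ l_Q(\pi) : \pi \in \mathcal{P}_{i,\lambda} \}$ with $G_k$ via the closed-walk subgroup and the fundamental cycles attached to the non-forest edges. The argument is correct, including the one genuinely delicate step: because the tree path $\pi_0$ carries trivial label, path labels and closed-walk labels coincide, and each fundamental cycle contributes exactly $l_Q(e)^{\pm 1}$, so the resulting subgroup is generated by the non-zero entries of $C_k$.
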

In (2) saying that $C_i : \Lambda_i \times I_i \rightarrow G^0$ is a connected regular Rees matrix semigroup over $G^0$ just means that the corresponding Graham--Houghton graph is connected. So the blocks $C_1, \ldots, C_n$ correspond to the connected components of the graph $\Gamma(S)$. The union of Rees matrix semigroups given in part (3)  denotes the zero sum of the semigroups. 
Given a finite semigroup $S$, it is known that a Graham normalization for $S$ can be computed from the multiplication table of $S$ in polynomial time. Graham normalization has shown to be an effective tool in the study of finite semigroups, for instance 
the regular Type II elements of any semigroup were described in coordinates in \cite{Rhodes1972} making use of Graham normalizations.

We note that in \cite{key-29} Graham does substantially more than just use his graph theoretic approach to describe the idempotent generated subsemigroup of a finite $0$-simple semigroup. He actually works with a more general graph than the one given in Definition~\ref{def_GH} above, which is constructed from a finite $0$-simple semigroup together with a given fixed subset  $X$ of $S$. When one is interested in the idempotent generated subsemigroup of $S$ then attention may be restricted to the case $X = E(S)$, resulting in the graph given in Definition~\ref{def_GH}; see \cite[Section~4]{key-29}. 
In addition to the results above, in \cite{key-29} Graham also uses this general graph-theoretic approach to describe: the maximal nilpotent subsemigroups and maximal subsemigroups of finite $0$-simple semigroups. The maximal completely simple subsemigroups and maximal zero subsemigroups can also be described using his graph. This approach can be useful for investigating other notions of rank such as nilpotent and idempotent rank (see \cite{Gray2008} for example).      
In another related, and largely forgotten beautiful paper \cite{GGR68} Graham, Graham and Rhodes use this approach to characterise maximal subsemigroups of finite semigroups in general.

\subsection{The rank of a Rees matrix semigroup}

Using Graham's result we can now give a formula for the rank of an arbitrary Rees matrix semigroup. This result was proved for finite regular Rees matrix semigroups in 
\cite[Corollary~9.1]{Gray2005}, and extending to arbitrary finite Rees matrix semigroups is straightforward; we give the details of this extension in the proof below. 
Before stating the result we shall need one more concept. Given a subset $A$ of a semigroup $S$, we define the \emph{relative rank of $S$ modulo $A$} as the minimal number of elements of $S$ that need to be added to $A$ in order to generate the whole of $S$, that is:
\[
\rank{S:A} = \min\{|X| : \lb A \cup X \rb = S \}. 
\]
\begin{thm}\label{rankreesnonregular}
Let $S=\zrms{G}{I}{\Lambda}{P}$ be a Rees matrix semigroup, with structure matrix in Graham normal form 
\[
P = 
\begin{pmatrix}
C_R & 0 \\
0 & C_N
\end{pmatrix}
\]
where $C_N$ is a $\Lambda' \times I'$ zero matrix and $C_R: (\Lambda \setminus \Lambda') \times (I \setminus I') \rightarrow G^0$ is a block diagonal matrix of the form
\[
C_R = 
\begin{pmatrix}
C_1 	& 		0 		& 		\ldots 		& 		0 			\\
0		&		C_2	&		\ldots		&		\vdots   	\\
\vdots 	&		\vdots	&		\ddots		&		\vdots		\\
0		&		\ldots	&		0			&		C_n
\end{pmatrix}
\]
where $C_i : \Lambda_i \times I_i \rightarrow G^0$ is a connected regular Rees matrix semigroup over $G^0$ for $i=1,\ldots,n$. If $P$ is a zero matrix then $\rank{S} = |G||I||\Lambda|$, otherwise:
\[
\rank{S} = 
\max(|I \setminus I'|, |\Lambda \setminus \Lambda'|, \sigma_{\mathrm{min}} + n - 1) + |I'| + |\Lambda'|,
\]
where, with $H_i$ denoting the subgroup of $G$ generated by the non-zero entries in $C_i$, we define
\[
\sigma_{\mathrm{min}} = \min\{ 
\rank{
G:\bigcup_{i=1}^n g_i H_i g_i^{-1} \; | \; g_1, \ldots, g_n \in G
}
\}.
\]
\end{thm}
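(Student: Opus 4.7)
The plan is to prove matching upper and lower bounds using the regular case \cite[Corollary~9.1]{Gray2005} as a black box. The zero-matrix case is immediate: if $P$ is identically zero, every product in $S$ is zero, so every non-zero element must itself be a generator, giving $\rank{S}=|G||I||\Lambda|$. In the non-trivial case, Graham normal form cleanly partitions the non-zero part of $S$ into four disjoint strata
\[
(I\setminus I')\times G\times (\Lambda\setminus\Lambda'),\quad
I'\times G\times (\Lambda\setminus\Lambda'),\quad
(I\setminus I')\times G\times \Lambda',\quad
I'\times G\times \Lambda',
\]
and this partition is the backbone of the argument.

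The key technical observation is a \emph{placement lemma}: in any non-zero product of elements of $S$, a factor whose first coordinate lies in $I'$ must be leftmost (since $p_{\lambda i}=0$ for all $\lambda$ when $i\in I'$), and symmetrically a factor whose last coordinate lies in $\Lambda'$ must be rightmost. In particular, any non-zero product evaluating to an element of the regular Rees matrix subsemigroup $\zrms{G}{I\setminus I'}{\Lambda\setminus\Lambda'}{C_R}$ uses only factors from the first stratum.

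For the lower bound, given a generating set $A$, split $A\setminus\{0\}$ into subsets $A_{\mathrm{reg}},A_{\mathrm{row}},A_{\mathrm{col}},A_{\mathrm{null}}$ according to the four strata (in the listed order). The placement lemma forces $A_{\mathrm{reg}}$ alone to generate the regular part, so $|A_{\mathrm{reg}}|\ge \max(|I\setminus I'|,|\Lambda\setminus\Lambda'|,\sigma_{\mathrm{min}}+n-1)$ by the regular case. Next, for each fixed $i\in I'$, the elements of $\{i\}\times G\times(\Lambda\setminus\Lambda')$ (which exist because $\Lambda\setminus\Lambda'\neq\emptyset$ when $P\neq 0$) force at least one element of $A_{\mathrm{row}}$ with first coordinate $i$, since the leftmost letter of any word producing such an element must have first coordinate $i$ and last coordinate in $\Lambda\setminus\Lambda'$. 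Summing gives $|A_{\mathrm{row}}|\ge |I'|$, and symmetrically $|A_{\mathrm{col}}|\ge |\Lambda'|$; disjointness of the strata yields the lower bound.

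For the upper bound I would exhibit an explicit generating set of the correct size. Take a minimal generating set $A_{\mathrm{reg}}$ for the regular part and, for each $i\in I'$, choose $r_i=(i,1_G,\mu_i)$ with $\mu_i\in\Lambda\setminus\Lambda'$; for each $\lambda\in\Lambda'$, choose $c_\lambda=(k_\lambda,1_G,\lambda)$ with $k_\lambda\in I\setminus I'$. A case analysis over the four strata then shows that $A_{\mathrm{reg}}\cup\{r_i:i\in I'\}\cup\{c_\lambda:\lambda\in\Lambda'\}$ generates $S$: first-stratum elements come directly from $A_{\mathrm{reg}}$, second-stratum elements arise as $r_i\cdot y$, third as $y\cdot c_\lambda$, and fourth as $r_i\cdot y\cdot c_\lambda$. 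The main obstacle is the bookkeeping in this final step---one must choose the endpoints of the bridging element $y\in\langle A_{\mathrm{reg}}\rangle$ in Graham components that guarantee non-vanishing of the relevant $p$-entries, and then select the $G$-coordinate of $y$ freely to match the target---but both are possible because $\langle A_{\mathrm{reg}}\rangle$ is the entire regular Rees matrix semigroup $\zrms{G}{I\setminus I'}{\Lambda\setminus\Lambda'}{C_R}$.
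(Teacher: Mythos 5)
Your proposal is correct and follows essentially the same route as the paper: reduce to the regular block via \cite[Corollary~9.1]{Gray2005}, prove the lower bound by showing a generating set must contain a generating set of the regular part plus at least one element per isolated row and column, and prove the upper bound by adjoining one bridging element per isolated index to a minimal generating set of the regular part. The only cosmetic difference is that you encode the structural constraints in a ``placement lemma,'' whereas the paper phrases the same facts via the ideals $U=(I\times G\times\Lambda')\cup\{0\}$ and $V=(I'\times G\times\Lambda)\cup\{0\}$.
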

\begin{proof} 
Clearly if $P$ is a zero matrix then $S$ is a semigroup with zero multiplication 
and $\rank{S} = |G||I||\Lambda|$, so suppose otherwise, that is, suppose that $C_R$ is non-empty. 

Let $T = \zrms{G}{I \setminus I'}{\Lambda \setminus \Lambda'}{C_R}$ which is a regular Rees matrix semigroup over $G$ with structure matrix $C_R$ in Graham normal form. Now by \cite[Corollary~9.1]{Gray2005} it follows that
\[
\rank{T} = \max(|I\setminus I'|, |\Lambda \setminus \Lambda'|, \sigma_{\mathrm{min}} + n - 1)
\]
with $\sigma_{\mathrm{min}}$ defined as in the statement of the theorem. Therefore, to complete the proof of the theorem it will suffice to show
\begin{equation}\label{eqn_rank}
\rank{S}  =  \rank{T} + |I'| + |\Lambda'|. 
\end{equation}
Before proving this equality we shall first need some basic observations about $S$. Define
\[
U = (I \times G \times \Lambda') \cup \{ 0 \}, 
\quad \mbox{and} \quad
V = (I' \times G \times \Lambda) \cup \{ 0 \}. 
\]
Straightforward computations show that both $U$ and $V$ are two-sided ideals of $S$, and hence so are $U \cap V$ and $U \cup V$. 
Also, \[\lb S \setminus (U \cap V) \rb = S. \tag{$\dagger$}\] To see this, fix $\lambda_0 \in \Lambda \setminus \Lambda'$ and $i_0 \in I \setminus I'$ such that $p_{\lambda_0 i_0} \neq 0$ (this is possible since $C_R$ is regular). Then for all $i' \in I'$ and $\lambda' \in \Lambda'$ we have
\[
\{ (i',g,\lambda') : g \in G \} = 
\{ (i',g,\lambda_0) : g \in G \}
\{ (i_0,g,\lambda') : g \in G \},
\]
which establishes the claim. Now we return to the proof of the equality \eqref{eqn_rank}. 

\vspace{2mm}

\noindent ($\leq$) We begin by showing that a generating set with the desired size can be found. Let $B \subseteq (I \setminus I') \times G \times (\Lambda \setminus \Lambda')$ be a generating set for $T$ with $|B| = \rank{T}$. Choose and fix some $i^* \in I \setminus I'$ and $\lambda^* \in \Lambda \setminus \Lambda'$ and set 
\[
B_I = \{ (i^*, 1_G, \lambda') : \lambda' \in \Lambda' \}, 
\quad \mbox{and} \quad 
B_\Lambda = \{ (i', 1_G, \lambda^*) : i' \in I' \}.  
\]
Then using the fact that $C_R$ is regular and that $B$ generates $T$ it is an easy exercise to verify that $B \cup B_I \cup B_\Lambda$ is a generating set for $S$ with 
\[
|B \cup B_I \cup B_\Lambda|
=
\rank{T} + |I'| + |\Lambda'|. 
\]

\vspace{2mm}

\noindent ($\geq$) Let $A$ be a generating set for $S$ with $|A| = \rank{S}$. Since $U \cup V$ is an ideal it follows that any product of elements from $A$ that belongs to $T \setminus \{ 0 \} = S \setminus (U \cup V)$ must be a product of elements from $A \cap T$. Therefore
\[
\lb A \cap T \rb = \lb A \rb \cap T = T. 
\]
Also, since $U \cap V$ is an ideal, by ($\dagger$) we conclude
\[
\lb A \setminus (U \cap V) \rb = \lb S \setminus (U \cap V) \rb = S. 
\]
Since $A \setminus (U \cap V) = A \setminus (I' \times G \times \Lambda')$ generates $S$ it must intersect every row and column of $S$ (meaning that for every $i \in I$ at least one element of $A \setminus (U \cap V)$ has $i$ as first component, and similarly for every $\lambda \in \Lambda$). We conclude
\begin{eqnarray*}
\rank{S} = |A| & \geq & 
|A \cap T| + |A \cap (I' \times G \times (\Lambda \setminus \Lambda'))| +
|A \cap ((I\setminus I') \times G \times \Lambda')| \\
& \geq & \rank{T} + |I'| + |\Lambda'|, 
\end{eqnarray*} 
as required. 
%
%
%\
%
%\
%
%\
%
%\
%
%\
\end{proof}
The steps one would have to take to find a generating set with this prescribed minimal size may be extracted from the details of the  above proof together with the proofs from \cite{Gray2005}. We shall not go into this here. 
Let us now make a few comments about how this technical looking result should be interpreted. 

\

\noindent (i) Roughly speaking one should think of $\sigma_{\mathrm{min}}$ as measuring the contribution made by the idempotents. The greater the contribution made by the idempotents, the larger the groups $H_i$ will be, and the smaller $\sigma_{\mathrm{min}}$ will need to be (since it represents the number of additional elements that are required in order to generate the whole of $G$). 

\vspace{2mm}

\noindent (ii) Features that make $S$ hard to generate include 
(a) a large zero block $C_N$
(b) $C_R$ having many connected components (i.e. $n$ large)
(c) the groups $H_i$ being small (i.e. low contribution from the idempotents). 

\vspace{2mm}

\noindent (iii) In many practical situations the formula simplifies dramatically. For example, $\sigma_{\mathrm{min}}$ is certainly bounded above by the rank of the group $G$, so if this value is small (for instance equal to $1$ or $2$) then 
the formula simplifies greatly.

\

\noindent If $S$ happens to be idempotent generated then $\Lambda' = I' = \varnothing$, $n=1$ and $H_1 = G$ and so $\sigma_{\mathrm{min}} = 0$ and we immediately obtain the following. 
\begin{corol}\label{cor_IG}
If $S = \zrms{G}{I}{\Lambda}{P}$ is idempotent generated then $\rank{S} = \max(|I|,|\Lambda|)$. 
\end{corol}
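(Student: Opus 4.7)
The plan is to deduce the corollary as a straightforward specialisation of Theorem~\ref{rankreesnonregular} once one has extracted strong structural consequences from the hypothesis that $S$ is idempotent generated. First I would unpack what ``$S = \lb E(S) \rb$'' forces on the Graham normal form data $(I', \Lambda', n, H_1, \ldots, H_n)$ appearing in Theorem~\ref{GNF}, and then plug these values into the rank formula.

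Specifically, suppose $\lb E(S) \rb = S$ and assume without loss of generality that $P$ is in Graham normal form. If $i' \in I'$, then the row of $P$ indexed by $i'$ is zero, so by Theorem~\ref{thm_GrahamHowie} no element of $S$ with first coordinate $i'$ lies in $\lb E(S) \rb$; since $|G||\Lambda| \geq 1$ this contradicts $\lb E(S) \rb = S$. Thus $I' = \varnothing$, and the symmetric argument gives $\Lambda' = \varnothing$. Next, if the graph $\Gamma(S)$ had $n > 1$ connected components, then taking $i \in I_1$ and $\lambda \in \Lambda_2$, Theorem~\ref{thm_GrahamHowie} shows that $(i,g,\lambda) \notin \lb E(S) \rb$ for any $g \in G$, again a contradiction; hence $n = 1$. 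Finally, Theorem~\ref{GNF}(3) now gives $\lb E(S) \rb = \zrms{H_1}{I}{\Lambda}{P}$, and comparing with $S = \zrms{G}{I}{\Lambda}{P}$ yields $H_1 = G$.

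With these values in hand I would apply Theorem~\ref{rankreesnonregular}. Since $H_1 = G$ and $n = 1$, the definition of $\sigma_{\mathrm{min}}$ gives
\[
\sigma_{\mathrm{min}} \;=\; \rank{G : g_1 G g_1^{-1}} \;=\; \rank{G : G} \;=\; 0,
\]
and since $I' = \Lambda' = \varnothing$ the formula collapses to
\[
\rank{S} \;=\; \max\bigl(|I|,\, |\Lambda|,\, 0 + 1 - 1\bigr) \;=\; \max(|I|,\,|\Lambda|),
\]
as claimed. There is no real obstacle here; the only thing to be a little careful about is verifying that the hypothesis rules out the zero-matrix case of Theorem~\ref{rankreesnonregular} (which it does, since a zero $P$ would give $\lb E(S)\rb = \{0\}$, forcing $S$ itself to be trivial and making the statement vacuous).
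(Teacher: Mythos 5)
Your proposal is correct and follows exactly the route the paper takes: the paper's own justification is the one-line remark preceding the corollary, namely that idempotent generation forces $I' = \Lambda' = \varnothing$, $n=1$ and $H_1 = G$, hence $\sigma_{\mathrm{min}} = 0$, after which Theorem~\ref{rankreesnonregular} gives $\max(|I|,|\Lambda|)$. You have simply supplied the (correct) details behind that remark via Theorems~\ref{thm_GrahamHowie} and~\ref{GNF}, with only the trivial slip of calling the entries $p_{\lambda i'}$ indexed by $i' \in I'$ a ``row'' rather than a column of the $\Lambda \times I$ matrix $P$.
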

\noindent (See \cite[Theorem~2.4]{Gray2005} for a short direct proof of this fact). At the other extreme, when $S$ is an inverse completely $0$-simple semigroup, putting $P$ into Graham normal form corresponds to taking the identity matrix as structure matrix, from which we see that $\sigma_{\mathrm{min}} = \rank{G}$ and the number of connected components $n$ is given by $n=|I| = |\Lambda|$, thus in this case we obtain
\begin{corol}
If $S = \zrms{G}{I}{\Lambda}{P}$ is an inverse completely $0$-simple semigroup (that is, $S$ is a Brandt semigroup over $G$) then 
$
\rank{S} = \rank{G}  + |I| - 1. 
$
\end{corol}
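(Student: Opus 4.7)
The plan is to apply Theorem~\ref{rankreesnonregular} once we have identified the Graham normal form of the structure matrix of a Brandt semigroup and read off the relevant parameters. The key structural fact is that $S$ being inverse completely $0$-simple is equivalent to $|I| = |\Lambda|$ together with $P$ being a monomial matrix (exactly one non-zero entry in each row and column). This follows because inverseness forces each $\gr$-class and each $\gl$-class of the non-zero $\gd$-class to contain a unique idempotent, which via the bijection between non-zero idempotents $(i, p_{\lambda i}^{-1}, \lambda)$ and non-zero positions $(\lambda, i)$ of $P$ translates into the monomial condition. In particular, every row and every column of $P$ is non-zero, so $I' = \Lambda' = \varnothing$.

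Putting $P$ into Graham normal form via Theorem~\ref{identityspanningtree} is now especially easy: the Graham--Houghton graph $\Gamma(S)$ decomposes as a disjoint union of $|I|$ geometric edges, one per non-zero position of $P$, and each component is already its own spanning tree. Normalizing the edge labels along these trees to $1_G$ turns $P$ into a permutation matrix, and a harmless reindexing of $\Lambda$ brings it to the identity matrix. Hence the Graham normal form of $P$ has $n = |I|$ connected components, each a $1 \times 1$ block $C_i$ whose sole non-zero entry is $1_G$.

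Consequently the subgroup $H_i = \langle 1_G \rangle$ generated by the non-zero entries of $C_i$ is trivial. For any choice of $g_1, \ldots, g_n \in G$ we then have $\bigcup_{i=1}^{n} g_i H_i g_i^{-1} = \{1_G\}$, so
\[
\sigma_{\mathrm{min}} = \rank{G : \{1_G\}} = \rank{G}.
\]
Substituting into the formula of Theorem~\ref{rankreesnonregular} yields
\[
\rank{S} = \max(|I|, |\Lambda|, \sigma_{\mathrm{min}} + n - 1) = \max(|I|, \rank{G} + |I| - 1) = \rank{G} + |I| - 1,
\]
where the final equality uses $\rank{G} \geq 1$.

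The only step that carries any substance is the combinatorial identification in the first paragraph of inverse completely $0$-simple Rees matrix semigroups as those with monomial structure matrix; this is a standard bit of semigroup bookkeeping, after which everything is a direct specialization of Theorem~\ref{rankreesnonregular}.
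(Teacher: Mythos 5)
Your proof is correct and follows essentially the same route as the paper, which derives the corollary from Theorem~\ref{rankreesnonregular} by observing that the Graham normal form of a Brandt semigroup's structure matrix is the identity matrix, whence $n=|I|=|\Lambda|$ and $\sigma_{\mathrm{min}}=\rank{G}$. Your write-up simply supplies the details (the monomial-matrix characterisation of inverseness and the trivial groups $H_i$) that the paper leaves as a one-sentence remark.
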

This corollary is in fact an old result of Gomes and Howie; see \cite[Theorem~3.3]{key-2}. 
%
%
%
%
%
%

\begin{comment}
\begin{lem} \label{GNFones}
Let $S=\zrms{G}{I}{\Lambda}{P}$ be a completely $0$-simple semigroup. Then the matrix $P$
may normalized as $P'$ in such a way that $P'$ is in Graham normal form and every row of
$P'$ contains at least one occurrence of the identity $1_G$.
\end{lem}
\begin{proof}
Fix non-zero terms $t_{i} : i \in \Lambda$ for each row of $P$. Now multiply on the left by
${t_i}^{-1}$ for each $i \in \Lambda$. Call the new matrix $Q$. Clearly if $K$ is the subgroup
generated by the non-zero entries in $Q$ and $L$ is the subgroup generated by the non-zero
entries in $P$ then $K \leq L$. On the other hand, we know that the middle components of
$\lb E(S) \rb \cap H_{1,1}$ are all in $K$ and that $|\lb E(S) \rb \cap H_{1,1}| =
|L|$. Therefore $|K| \geq |L|$ and so $K = L$.
\end{proof}

The result above comes from a paper of R. Graham where he demonstrates the power of
viewing finite Rees matrix semigroups as labelled, directed bipartite graphs.  These
results were proven in 1968 by R. L. Graham. Later, in 1976, J. Howie carried our a very
similar analysis of $\lb E(S) \rb$ using bipartite graphs. His results are weaker than
those of Graham but the paper is shorter and, in many ways, is more accessible. In yet
another paper on the same subject Houghton (1977) proves the following:

``For any vertex $x$ of $\Gamma$, the values of $p$ on the closed paths at $x$ form a
subgroup of $G$. A different choice of base point gives a conjugate subgroup.''
\end{comment}

\section{Applications to semigroups of transformations}

As mentioned in the introduction, 
the majority of the literature on finding small generating sets of semigroups, and computing ranks, is devoted to the study of the wide variety of concrete transformation semigroups that arise in nature. Now, at first sight, the main result above may seem highly abstract, but in fact it can often be applied to give very fast simple proofs concerning ranks of concrete examples. Here we shall see a few applications of this kind to give a flavour of situations where this result can be applied to compute ranks where direct computations would be non-trivial. 

In this section we shall be concerned with transformations. Given a transformation $\alpha \in T_n$ by the \emph{rank of $\alpha$} we mean the size $|\im{\alpha}|$ of the image of $\alpha$. So the word rank will have two meanings in this section. This will not lead to any confusion since it will always be clear from context in which of the two senses the word is being used.

\subsection{A useful result for applications}

The following result, which is a straightforward application of Theorem~\ref{rankreesnonregular}, applies in many real-world examples. 

\begin{thm}
\label{thm_useful}
Let $S$ be a finite semigroup with a regular maximal $\gj$-class $J$ such that $\lb J \rb = S$. Let $H$ be a maximal subgroup of $J$, and let $i$ and $j$ be the number of $\gr$- and $\gl$-classes, respectively, of $J$. Then:
\begin{enumerate}
\item If $H$ is trivial or finite cyclic then $\rank{S} = \max(i,j)$.
\item If $\rank{H}=2$ (in particular, if $H \cong S_r$ the symmetric group with $r \geq 3$) then
\[
\rank{S} = \begin{cases}
\max(i,j)+1 & \mbox{if $i=j$ and $J$ has exactly one idempotent in} \\
& \mbox{every $\gr$- and in every $\gl$-class,} \\
\max(i,j) & \mbox{otherwise.}
\end{cases}
\]
\end{enumerate} 
\end{thm}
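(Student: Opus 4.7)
The plan is to reduce to computing $\rank{J^*}$ for the principal factor $J^* = J \cup \{0\}$, and then invoke Theorem~\ref{rankreesnonregular}. For the reduction, one shows $\rank{S} = \rank{J^*}$: any $B \subseteq J$ generating $J^*$ satisfies $\lb B \rb \supseteq J$ in $S$, hence $\lb B \rb \supseteq \lb J \rb = S$, giving $\rank{S} \leq \rank{J^*}$; conversely, since $J$ is a maximal $\gj$-class, no product of elements from $\gj$-classes strictly below $J$ can re-enter $J$, so if $A$ generates $S$ then $A \cap J$ generates $J^*$, yielding $|A| \geq \rank{J^*}$.

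Since $J$ is regular, $J^* \cong \zrms{H}{I}{\Lambda}{P}$ with $|I|=i$, $|\Lambda|=j$ and $P$ regular; in particular, putting $P$ into Graham normal form forces $I' = \Lambda' = \varnothing$, so Theorem~\ref{rankreesnonregular} gives
\[
\rank{J^*} = \max(i,\, j,\, \sigma_{\mathrm{min}} + n - 1),
\]
where $n$ is the number of connected components of $\Gamma(J^*)$ and $\sigma_{\mathrm{min}}$ is the invariant of that theorem. I would use two basic bounds throughout: since $\Gamma(J^*)$ has no isolated vertices, each component meets both $I$ and $\Lambda$, so $n \leq \min(i,j)$; and by definition of relative rank, $\sigma_{\mathrm{min}} \leq \rank{H}$.

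For (1), the bound $\rank{H} \leq 1$ gives $\sigma_{\mathrm{min}} + n - 1 \leq n \leq \min(i,j) \leq \max(i,j)$, so $\rank{J^*} = \max(i,j)$. For (2), $\sigma_{\mathrm{min}} \leq 2$; the decisive dichotomy is whether $\Gamma(J^*)$ is a perfect matching. If $i = j$ and every row and every column of $P$ contains exactly one non-zero entry, then $J^*$ is a Brandt semigroup: $\Gamma(J^*)$ is a disjoint union of $i$ single edges, each edge is its own spanning tree, so after Graham normalization every $H_k$ is trivial and $\sigma_{\mathrm{min}} = \rank{H:\{1\}} = \rank{H} = 2$; the formula then gives $\rank{J^*} = \sigma_{\mathrm{min}} + n - 1 = i + 1 = \max(i,j) + 1$. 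Otherwise, I would argue $n \leq \max(i,j) - 1$: either $i \neq j$, forcing $n \leq \min(i,j) \leq \max(i,j) - 1$; or $i = j$ but some row or column of $P$ has $\geq 2$ non-zero entries, so some component of $\Gamma(J^*)$ contains $\geq 2$ vertices from one part, and a counting argument gives $n \leq i - 1 = \max(i,j) - 1$. In both sub-alternatives $\sigma_{\mathrm{min}} + n - 1 \leq 2 + (\max(i,j) - 1) - 1 = \max(i,j)$, so $\rank{J^*} = \max(i,j)$. The only real subtlety is isolating the Brandt configuration as the unique case in which $\sigma_{\mathrm{min}} + n - 1$ overtakes $\max(i,j)$; in every other configuration, the combinatorial drop $n \leq \max(i,j) - 1$ absorbs the $\sigma_{\mathrm{min}} \leq 2$ contribution.
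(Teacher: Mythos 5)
Your proof is correct and follows exactly the route the paper intends: the paper offers no written proof, describing the result only as ``a straightforward application of Theorem~\ref{rankreesnonregular}'', and your argument --- reducing to $\rank{J^*}$ via maximality of $J$, noting $I'=\Lambda'=\varnothing$ by regularity, and then bounding $\sigma_{\mathrm{min}}+n-1$ against $\max(i,j)$ with the Brandt configuration isolated as the only case where it exceeds that bound --- is precisely that application carried out in full. No gaps.
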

Of course the first case in (2) above is equivalent to saying that $J^*$ is an inverse semigroup.
The above result applies in many natural examples. Specifically it can be used to compute the ranks of the proper two-sided ideals of any of the following semigroups: the full transformation monoid $T_n$, the monoid of partial transformations $P_n$, the symmetric inverse monoid $I_n$, 
finite full linear monoids, and the
partition monoid (in the sense of \cite{East2011}), among others.

\subsection{Semigroups generated by mappings with prescribed kernels and images}\label{afots}

Many of the
examples that have been considered in the literature happen to be idempotent generated and the
question of rank can then often be answered by applying Corollary~\ref{cor_IG}.  In
order to find less trivial applications for Theorem~\ref{rankreesnonregular}, examples that are not
idempotent generated, and those that are not even connected (meaning the Graham--Houghton graphs of the Rees matrix semigroups that arise are not connected) or regular, should be considered.
In this subsection we consider a natural family of such examples.

\begin{defn}
Let $n,r \in \mathbb{N}$ with $2 < r < n$. Let $A$ be a set of $r$-subsets of $\{1,
\ldots, n\}$ and let $B$ be a set of partitions of $\{1, \ldots, n\}$, each
with $r$ equivalence classes (we shall call these \emph{partitions of weight $r$}). Define:
\[
S(A,B) = \lb \ \{ \alpha \in T_n: \im{\alpha} \in A, \ker{\alpha} \in B \}  \ \rb,
\]
the semigroup generated by all maps with image in $A$ and kernel in $B$.
\end{defn}

Clearly the semigroup $S(A,B)$ is neither regular nor idempotent generated in general.

In \cite{MR1994539}, \cite{MR1994538} and \cite{MR1933743} subsemigroups of $T_n$
generated by elements all with the same, so-called, kernel type are considered. These semigroups are
idempotent generated and in this series of papers their ranks and idempotent ranks are
computed. The semigroups $S(A,B)$ are a more general class since when $A = \{ X \subseteq
\{1, \ldots, n\} : |X| = r \}$ and $B$ is the set of kernels of a particular partition
type then we recover the examples of \cite{MR1994539}.

\begin{defn}
Let $\Gamma$ be a finite simple graph. For a subset $X$ of the vertices of $\Gamma$ define
\[
V_{0}(X) = \{ x \in X : d(x)=0   \},
\]
(where $d(x)$ denotes the degree of the vertex $x$) 
the set of all isolated vertices, and
\[
V_{+}(X) = \{ x \in X : d(x)>0   \},
\]
the vertices with non-zero degree so that $X = V_{0}(X) \cup V_{+}(X) $. Also define
$v_{0}(X) = |V_{0}(X)|$, $v_{+}(X) = |V_{+}(X)|$ and $\maxdegree{\Gamma} = \max\{d(v): v
\in V(\Gamma) \}$: the maximum degree of a vertex of the graph.
\end{defn}

Recall that the geometric edges in the Graham--Houghton graph $\Gamma(S)$ of a Rees matrix semigroup are in natural bijective correspondence with the non-zero idempotents. 
It is well known that the $\gr$-classes, and $\gl$-classes in $T_n$ may be indexed by kernels and images in a natural way, and then that  the idempotents in
$T_n$ are indexed by pairs $(\mathfrak{I},\mathfrak{K})$ where $\mathfrak{I}$ is an image and $\mathfrak{K}$ is a kernel such that $\mathfrak{I}$
is a transversal of the kernel classes of $\mathfrak{K}$ (see \cite[Chapter~2]{key-51}). This leads to the following definition.

\begin{defn} \label{theknrgraph}
Let $A$ be a set of $r$-subsets of $\{1,\ldots,n\}$ and $B$ be a set of partitions of
$\{1,\ldots,n\}$ of weight $r$. Define the bipartite graph $\Gamma(A,B)$ to have vertices
$A \cup B$ and $a \in A$ connected to $b \in B$ if and only if $a$ is a transversal of
$b$.
\end{defn}

\begin{lem} \label{genzrms}
Let $\mathcal{I} = \{ \alpha \in S(A,B): |\im{\alpha}|<r \}$ which is a two-sided ideal of $S(A,B)$. Then the Rees quotient $S(A,B)/\mathcal{I}$ is isomorphic to a (possibly non-regular) Rees matrix semigroup over the symmetric group $G \cong S_r$ and
\[
\rank{S(A,B)} = \rank{S(A,B)/\mathcal{I}}.
\]
\end{lem}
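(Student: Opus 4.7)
The proof proceeds in three steps. First, $\mathcal{I}$ is clearly a two-sided ideal, since $|\im(\alpha\beta)|\leq\min(|\im\alpha|,|\im\beta|)$ for any $\alpha,\beta\in T_n$, so composing with a rank-less-than-$r$ element forces the product into $\mathcal{I}$. Next I would identify
\[
S(A,B)\setminus\mathcal{I}=\bigcup H_{a,b},
\]
where the union is over all pairs $(a,b)\in A\times B$ such that $a$ is a transversal of $b$ (the edges of $\Gamma(A,B)$), and $H_{a,b}$ denotes the $\gh$-class of $T_n$ of transformations with image $a$ and kernel $b$. Each such $\gh$-class contains an idempotent and is a group isomorphic to $S_r$. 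The inclusion $\supseteq$ is immediate because every element of $H_{a,b}$ is a defining generator of $S(A,B)$. For $\subseteq$, I would use the standard fact that if $\alpha\beta$ has rank exactly $r$ then $\beta$ is injective on $\im\alpha$, forcing $\ker(\alpha\beta)=\ker\alpha$ and $\im(\alpha\beta)=\beta(\im\alpha)=\im\beta$; a routine induction on word length then shows that any rank-$r$ element of $S(A,B)$ has the kernel of its leftmost letter (hence in $B$) and the image of its rightmost letter (hence in $A$).

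Having identified the non-ideal part, the Rees quotient $S(A,B)/\mathcal{I}$ is precisely the sub-semigroup-with-zero of the principal factor of the $\gj$-class of rank-$r$ maps of $T_n$ spanned by the group $\gh$-classes above. Invoking the standard Rees coordinatization of this principal factor (with $\gr$-classes indexed by kernels and $\gl$-classes by images) exhibits $S(A,B)/\mathcal{I}$ as a Rees matrix semigroup $\zrms{S_r}{B}{A}{P}$, where the structure matrix $P$ has a zero entry at $(a,b)$ precisely when $(a,b)$ is not an edge of $\Gamma(A,B)$. In general $P$ is far from regular.

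For the rank equality, the direction $\rank{S(A,B)}\geq\rank{S(A,B)/\mathcal{I}}$ is standard, since the image of a generating set under a surjective homomorphism still generates the quotient (the adjoined zero being absorbed by the standing convention on semigroups with zero). For the reverse inequality the crucial point is that by construction \emph{every} generator of $S(A,B)$ has rank exactly $r$, and so every element of $\mathcal{I}$ is already a product of rank-$r$ elements of $S(A,B)\setminus\mathcal{I}$. Thus, given a minimum generating set $Y$ for $S(A,B)/\mathcal{I}$, chosen free of the zero, one lifts each member of $Y$ to the corresponding rank-$r$ transformation of $S(A,B)$, obtaining $\bar Y$. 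Any product of elements of $\bar Y$ that is non-zero in the quotient lifts verbatim, so $\bar Y$ generates all of $S(A,B)\setminus\mathcal{I}$, hence every defining generator, and hence $S(A,B)$ itself, yielding $\rank{S(A,B)}\leq|\bar Y|=\rank{S(A,B)/\mathcal{I}}$. I do not anticipate any serious obstacle here: the argument is essentially bookkeeping once one observes that every defining generator has rank exactly $r$. The one feature worth emphasising is the non-regularity of $P$, which is precisely what necessitates the generalised, non-regular version of the rank formula provided by Theorem~\ref{rankreesnonregular}.
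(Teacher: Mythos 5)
Your overall strategy is reasonable (the paper itself omits this proof as routine, so there is nothing to compare against), and your final rank-equality argument is sound, but the displayed identification of $S(A,B)\setminus\mathcal{I}$ is wrong as stated. You take the union of the $\gh$-classes $H_{a,b}$ only over those pairs $(a,b)\in A\times B$ for which $a$ is a transversal of $b$, i.e.\ over the edges of $\Gamma(A,B)$. The inclusion $\subseteq$ fails: the defining generating set of $S(A,B)$ consists of \emph{all} maps with image in $A$ and kernel in $B$, with no transversality requirement, so a generator whose image $a$ is not a transversal of its kernel $b$ is a rank-$r$ element of $S(A,B)$ lying in the non-group $\gh$-class $H_{a,b}$, which your union omits. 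This is not a degenerate situation; it is precisely the source of the non-regularity of $P$ that the lemma is designed to accommodate. In the paper's Example~\ref{ex_example}, for instance, $\{1,2,5\}$ is an isolated vertex of $\Gamma(A,B)$, yet every map with image $\{1,2,5\}$ and kernel in $B$ is a generator of rank $3$; under your identification none of these elements would belong to $S(A,B)\setminus\mathcal{I}$. The correct statement is that $S(A,B)\setminus\mathcal{I}$ is the union of $H_{a,b}$ over \emph{all} pairs $(a,b)\in A\times B$, which coincides with the defining generating set itself.

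The good news is that the error does not propagate. Your induction on word length (a rank-$r$ product inherits the kernel of its leftmost factor and the image of its rightmost factor) actually proves membership in the larger union, not the smaller one, and your Rees coordinatization already uses the full index sets $B$ and $A$ with a zero entry at each non-edge, which presupposes the corrected identification; only the corrected version gives the $|A||B|\,r!$ non-zero elements in the quotient that are needed for the $\maxdegree{A \cup B}=0$ case of Theorem~\ref{transsgrpapplication}. With that repair, the isomorphism claim and the two inequalities $\rank{S(A,B)}\geq\rank{S(A,B)/\mathcal{I}}$ (push a generating set through the quotient map) and $\rank{S(A,B)}\leq\rank{S(A,B)/\mathcal{I}}$ (lift a zero-free minimal generating set of the quotient and use that every defining generator has rank exactly $r$, so that $S(A,B)\setminus\mathcal{I}$ generates $S(A,B)$) go through as you describe.
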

\begin{proof}
The proof is straightforward and is omitted. 
\end{proof}

Note that the graph $\Gamma(A,B)$ is just the Graham--Houghton graph of the Rees matrix semigroup $S(A,B)/\mathcal{I}$. 

\begin{thm}\label{transsgrpapplication}
Let $n,r \in \mathbb{N}$ with $2<r<n$. Let $A$ be a set of $r$-subsets of $\{1,\ldots,n\}$, 
let $B$ be a set of partitions of $\{1,\ldots,n\}$ with weight $r$, and let  
\[
S(A,B) = \lb \ \{ \alpha \in T_n: \im{\alpha} \in A, \ker{\alpha} \in B \}  \ \rb \leq T_n. 
\]
Then:
\[
\rank{S(A,B)} = \begin{cases}
\max(v_{+}(A),v_{+}(B)) + v_0(A \cup B)   & \ \mbox{if} \ \maxdegree{A \cup B} \geq 2 \\
\max(v_{+}(A),v_{+}(B)) + v_0(A \cup B) +1 & \ \mbox{if} \ \maxdegree{A \cup B} =1 \\
|A||B|r!                                     & \ \mbox{if} \ \maxdegree{A \cup B} =0
 \end{cases}
\]
where $v_{+}$, $v_0$  and $\maxdegree{A \cup B}$ refer to values of the graph $\Gamma(A,B)$.
\end{thm}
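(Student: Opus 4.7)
The plan is to apply Theorem~\ref{rankreesnonregular} to the Rees matrix semigroup $S' := S(A,B)/\mathcal{I}$, which by Lemma~\ref{genzrms} has the same rank as $S(A,B)$. First I would identify $\Gamma(A,B)$ with the Graham--Houghton graph of $S'$ (as remarked just after Lemma~\ref{genzrms}) and translate the quantities in Theorem~\ref{rankreesnonregular} into graph-theoretic language. Taking the $\mathcal{R}$-index set $I$ to be the kernels $B$ and the $\mathcal{L}$-index set $\Lambda$ to be the images $A$, the isolated vertex counts give $|I'| + |\Lambda'| = v_0(A \cup B)$, the non-isolated row/column sizes give $|I \setminus I'| = v_+(B)$ and $|\Lambda \setminus \Lambda'| = v_+(A)$, and $n$ is the number of connected components of the subgraph of $\Gamma(A,B)$ induced on $V_+(A) \cup V_+(B)$. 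I would also record up front the universal bound $\sigma_{\mathrm{min}} \leq \rank{G} = \rank{S_r} = 2$, which is valid since $r \geq 3$ and which is essentially the only fact about the subgroups $H_i$ that the argument will need.

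The three clauses of the stated formula then correspond to the three possible values of $\maxdegree{A \cup B}$, and in each case I would simply specialize Theorem~\ref{rankreesnonregular}. When $\maxdegree{A \cup B} = 0$ the graph has no edges, so $S'$ has no non-zero idempotents, its structure matrix is identically zero, and the first clause of Theorem~\ref{rankreesnonregular} gives $\rank{S'} = |G||I||\Lambda| = r!|A||B|$. When $\maxdegree{A \cup B} \geq 2$ some connected component contains at least $3$ vertices while every component contains at least $2$ (since each meets both sides of the bipartition); hence $v_+(A) + v_+(B) \geq 2(n-1) + 3 = 2n+1$, so $\max(v_+(A), v_+(B)) \geq n+1$, and combined with $\sigma_{\mathrm{min}} \leq 2$ this forces $\sigma_{\mathrm{min}} + n - 1 \leq \max(v_+(A), v_+(B))$; thus the outer max in Theorem~\ref{rankreesnonregular} is attained at one of its first two arguments, collapsing the formula to $\max(v_+(A), v_+(B)) + v_0(A \cup B)$. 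When $\maxdegree{A \cup B} = 1$ the non-isolated subgraph is a disjoint union of $n$ edges with $v_+(A) = v_+(B) = n$; this subgraph is its own spanning forest, so after Graham-normalizing every edge-label to $1_G$ every $H_i$ is trivial, forcing $\sigma_{\mathrm{min}} = \rank{S_r} = 2$ and $\sigma_{\mathrm{min}} + n - 1 = n+1 > n$, which yields $\rank{S'} = (n+1) + v_0(A \cup B)$ and produces the extra ``$+1$'' in the stated formula.

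The main obstacle I anticipate is the counting step in the $\maxdegree{A \cup B} \geq 2$ case: one must carefully separate the single component of size $\geq 3$ from the remaining components of size $\geq 2$ in order to obtain $\max(v_+(A), v_+(B)) \geq n+1$, which is exactly what is needed to let the side-sizes dominate $\sigma_{\mathrm{min}} + n - 1$. Once that inequality is in hand, the proof is essentially mechanical: it uses only Theorem~\ref{rankreesnonregular} together with the generic bound $\sigma_{\mathrm{min}} \leq \rank{S_r} = 2$, and requires no further information about any individual subgroup $H_i$ beyond the trivial case noted for $\maxdegree{A \cup B} = 1$.
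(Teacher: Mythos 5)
Your proposal is correct and follows essentially the same route as the paper: reduce to the Rees matrix quotient via Lemma~\ref{genzrms}, apply Theorem~\ref{rankreesnonregular}, and split into the same three cases according to $\maxdegree{A \cup B}$, using $\sigma_{\mathrm{min}} \leq \rank{S_r} = 2$ throughout. The only cosmetic difference is in the case $\maxdegree{A \cup B} \geq 2$, where you derive $\max(v_+(A), v_+(B)) \geq n+1$ by counting vertices over components, while the paper observes directly that $C_R$ is not diagonal and hence $n < \max(|I \setminus I'|, |\Lambda \setminus \Lambda'|)$ --- these are the same observation.
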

\begin{proof}
As a result of Lemma~\ref{genzrms} is is sufficient to prove the result for a
 Rees matrix semigroup $S = \zrms{S_r}{I}{\Lambda}{P}$ which is isomorphic to
$S(A,B)/\mathcal{I}$. There are three cases to consider depending on the value of
the maximum degree 
$\maxdegree{A \cup B}$ of a vertex in the graph.
We may suppose that $P$ is in Graham normal form \[
P = 
\begin{pmatrix}
C_R & 0 \\
0 & C_N
\end{pmatrix},
\]
with the notation taken from the statement of Theorem~\ref{rankreesnonregular}. 

\vspace{2mm}

\setcounter{case}{0}
\begin{case} $\maxdegree{A \cup B} = 0$.\end{case} In this case the structure matrix $P$ consists entirely of
zeros so $S(A,B)/\mathcal{I}$ is a semigroup with zero multiplication 
which has $|A||B|r!$ non-zero elements and
the result follows trivially.

\bigskip

\begin{case} $\maxdegree{A \cup B} = 1$.\end{case} In this case the matrix $C_R$ is an $|\Lambda \setminus \Lambda'| \times |I \setminus I'|$ identity matrix (a square matrix with every diagonal entry equal to $1_G$). Thus $\sigma_{\mathrm{min}}=\rank{S_r}=2$, the number of connected components $n$ of $C_R$ is $n =  |I \setminus I'| = |\Lambda \setminus \Lambda'|$, 
and so by Theorem~\ref{rankreesnonregular} we obtain 
\[
\begin{array}{llll}
\rank{S(A,B)} & = & \rank{S(A,B)/\mathcal{I}}  \\ %& \mbox{(Lemma~\ref{genzrms})} \\
              & = &
\max(|I \setminus I'|, |\Lambda \setminus \Lambda'|, \sigma_{\mathrm{min}} + n - 1) + |I'| + |\Lambda'| \\
& = & \max(n, n, 2 + n - 1) + |I'| + |\Lambda'| \\
& =& n + 1 + |I'| + |\Lambda'| 
               =  \max(v_{+}(A),v_{+}(B)) + v_0(A \cup B) +1.
\end{array}
\]

\

\begin{case} $\maxdegree{A \cup B} \geq 2$.\end{case} 
In this case the matrix $C_R$ is not a diagonal matrix and thus in particular the number of components $n$ must be strictly less than $\max(|I \setminus I'|, |\Lambda \setminus \Lambda'|)$. Also, since $\rank{G} = \rank{S_r} = 2$ it follows that $\sigma_{\mathrm{min}} \leq \rank{S_r} = 2$. We conclude that
\[
\sigma_{\mathrm{min}} + n - 1 \leq 2 + n -1 = n+1 \leq \max(|I \setminus I'|, |\Lambda \setminus \Lambda'|). 
\]
Therefore by Theorem~\ref{rankreesnonregular} we obtain 
\[
\begin{array}{llll}
\rank{S(A,B)} & = & \rank{S(A,B)/\mathcal{I}}  \\ %& \mbox{(Lemma~\ref{genzrms})} \\
              & = &
\max(|I \setminus I'|, |\Lambda \setminus \Lambda'|, \sigma_{\mathrm{min}} + n - 1) + |I'| + |\Lambda'| \\
              & = &
\max(|I \setminus I'|, |\Lambda \setminus \Lambda'|) + |I'| + |\Lambda'| 
               =  \max(v_{+}(A),v_{+}(B)) + v_0(A \cup B).
\end{array}
\]
\end{proof}

Note that the result is slightly different for $r=2$ since $S_2$ is cyclic and so has rank
$1$, not $2$.

\begin{ex}\label{ex_example}

Let $n=7$ and $r=3$ and define the set of images:
\[
A = \{ \{1,2,3\}, \{1,6,7\}, \{5,6,7\}, \{2,4,6\}, \{1,2,5\}   \}
\]
and set of partitions:
\[
B = \{ (1,4,7 |  2,5  |   3,6), (1,2,3  |4,5,6  |  7), (1,2  |  4,6,7  |  3,5)  \}.
\]
Let $S(A,B)$ be the subsemigroup of $T_n$ generated by all mappings $\alpha$ with $\im \alpha \in A$ and $\ker \alpha \in B$. 
Clearly this generating set contains $3! \times 5 \times 3 = 90$ elements. Using Theorem~\ref{transsgrpapplication} we shall now see that we can get away with far fewer generators than this. 
 The graph $\Gamma(A,B)$ is isomorphic to:
\[
\xymatrix{
& & \node \lulab{(1,4,7|2,5|3,6)} & \node \lulab{(1,2,3|4,5,6|7)} & \node
  \lulab{(1,2|4,6,7|3,5)} & \\
& \node \ldlab{\{1,2,3\}} \ar @{-} [ur] & \node \ldlab{\{1,6,7\}}  \ar @{-} [ur]  & \node \ldlab{\{5,6,7\}}  \ar @{-}
   [ul] & \node \ldlab{\{2,4,6\}}  \ar @{-} [ull] & \node \ldlab{\{1,2,5\}}  \\
& \ \ \ \ \ \ \ \ \ \ &  \ \ \ \ \ \ \ \ \ \ &  \ \ \ \ \ \ \ \ \ \ &   \ \ \ \ \ \ \ \ \
  \ &  \ \ \ \ \ \ \ \ \ \
}
\]

\vspace{-6mm}

\noindent which has two isolated vertices so that $v_0(A \cup B)=2$, $v_+(B)=2$, $v_+(A)=4$ and
maximum degree $\mathcal{MD}(A \cup B) = 3 \geq 2$. Therefore by Theorem~\ref{transsgrpapplication}:
\[
\rank{S(A,B)} = \max(2,4) + 2 = 6.
\]
\end{ex}

Example~\ref{ex_example} has been included to demonstrate the usefulness of the abstract approach offered by Theorem~\ref{rankreesnonregular}. Here we are able to conclude that the minimal number of generators for the transformation semigroup $S(A,B)$ is $6$ without having to carry out any computations with transformations at all. 
In this example, it is now an easy exercise to write down six transformations that generate the semigroup. Indeed, in general
given a set $A$ of $r$-subsets of $\{1,\ldots,n\}$ and a set of partitions $B$ of weight $r$, 
it is a routine matter to use the graph $\Gamma(A,B)$ to actually write down a minimal cardinality generating set for the semigroup $S(A,B)$. The method depends on which of the three cases of  Theorem~\ref{transsgrpapplication} we are in. 
If $\maxdegree{A \cup B} =0$, then all the mappings with image in $A$ and kernel in $B$ must be included in the generating set. 
If $\maxdegree{A \cup B} =1$, then the problem comes down to writing down a minimal generating set of a Brandt semigroup over the symmetric group $S_r$. These are nothing but the principal factors of the symmetric inverse semigroup, explicit minimal generating sets for which are given in \cite{key-2}.  
If $\maxdegree{A \cup B} \geq 2$ then a minimal generating set may be found by considering the relationship \eqref{eqn_rank} between $\rank{S}$ and $\rank{T}$ in the proof of Theorem~\ref{rankreesnonregular} combined with the argument given in the proof of \cite[Theorem~2.4]{Gray2005}. 

Of course, in general one cannot expect an efficient algorithm which takes a finite $0$-simple semigroup and computes a generating set of minimal cardinality, since any finite group is a finite simple semigroup. So the best that one could expect in the general case is an efficient reduction to a problem in computational group theory. Given an arbitrary finite Rees matrix semigroup $S$, an algorithm of this kind is as follows: 
\begin{enumerate}
\item Express $S$ as a Rees matrix semigroup in Graham--Normal form. This can be computed in polynomial time from the multiplication table of $S$. 
\item Following the notation in the statement of Theorem~\ref{rankreesnonregular}, 
compute the subgroups $H_1, \ldots, H_n$ of $G$. 
\item Then one needs a group theoretic algorithm taking $H_1, \ldots, H_n$ and $G$ as input, and outputting (i) a list $g_1, \ldots, g_n$ of elements of $G$ such that 
\[
\rank{
G:\bigcup_{i=1}^n g_i H_i g_i^{-1} \; | \; g_1, \ldots, g_n \in G
}
\]
is as small as possible over all possible choices of $g_1, \ldots, g_n$ (i.e. is equal to $\sigma_{\mathrm{min}}$), and (ii) a set $x_1, \ldots, x_{\sigma_{\mathrm{min}}}$ of elements of $G$ such that the set
$
\left( \bigcup_{i=1}^n g_i H_i g_i^{-1} \right) \cup \{ x_1, \ldots, x_{\sigma_{\mathrm{min}}} \} 
$
generates $G$. 
\item Once the Graham--Normal form representation is given, along with the list of elements $g_1, \ldots, g_n, x_1, \ldots,  x_{\sigma_{\mathrm{min}}}$, a combination of the arguments given in the proof of Theorem~\ref{rankreesnonregular} above, together with the proof given of the forward implication of \cite[Theorem~7.1]{Gray2005}, can be used to write down an explicit minimal generating set for the given finite Rees matrix semigroup $S$.   
\end{enumerate}
In general, the most time consuming part of the above algorithm will be step (3). Indeed, the reason that in the spacial case of the semigroups $S(A,B)$ minimal generating sets can easily be computed, is due to the fact that step (3) is a triviality in such examples.

%%%
%%%
%%%

It is well known that the general linear group over a finite field has a generating set consisting of two elements. Thus there will be a  
natural analogue of Theorem~\ref{transsgrpapplication}
for subsemigroups of finite full linear monoids generated by matrices with prescribed column and row spaces. 
\subsection{Generating transformation monoids}

In this subsection we are interested in the following very general situation. Suppose we are given a set $A$ of transformations from $T_n$ and we are interested in the semigroup $S$ generated by $A$. Now, the generating set $A$ we have been given may not be at all efficient, and it would be of interest, if possible, to replace $A$ by a smaller set $B$ that also generates $S$. So, we would like to know $\rank{\lb A \rb}$ given $A$. In particular we would like to investigate worst case scenarios, in other words, given that $S$ is a transformation semigroup on $n$ points, how many generators (in the worst case) will we need to generate $S$? 
That is, what is $\max\{ \rank{S} : S \leq T_n \}$? As far as the author is aware, this natural question does not seem to have been considered anywhere in the literature. Some preliminary results will be given here, but there are still many open problems in this area that may well provide an interesting new direction to explore for those interested in transformation semigroups. 

This is a classical question for the symmetric group $S_n$ in the case of permutation groups. Indeed, a well-known result due to Jerrum \cite{Jerrum1982} says that any subgroup of $S_n$ can be generated by at most $n-1$ elements. Jerrum's result can be used to compute a base and strong generating set for an arbitrary subgroup of $S_n$ in polynomial time. The bound $n-1$ is not best possible. McIver and Neumann \cite{McIverNeumann1987} (see also \cite{MR1028457} and \cite[Volume 1, Section 8]{Handbook}) showed that any subgroup of $S_n$ can be generated by at most $\lfloor n/2 \rfloor$ if $n>3$. 
This result makes use of the classification of finite simple groups, and is best possible for $n\geq 3$ (consider the group generated by $\lfloor n/2 \rfloor$ disjoint transpositions). 
\begin{thm}\cite{McIverNeumann1987} \label{cameron}
$\rank{G} \leq \max(2, \lfloor n/2 \rfloor)$ for all $G \leq S_n$.
\end{thm}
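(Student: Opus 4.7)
The plan is to prove this by induction on $n$, reducing step by step from arbitrary subgroups to primitive ones, where the heavy machinery (the O'Nan--Scott theorem and CFSG) takes over. The base cases $n \leq 4$ are handled directly, since all subgroups of $S_4$ are generated by at most $2$ elements.

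First I would reduce to transitive groups. If $G \leq S_n$ has orbits $\Omega_1,\ldots,\Omega_k$ of sizes $n_1 \geq n_2 \geq \cdots \geq n_k$, and $G_i$ denotes the image of $G$ under the restriction homomorphism to $\Omega_i$, then $G$ embeds as a subdirect product of $G_1,\ldots,G_k$. Applying the standard inequality $d(H) \leq d(N) + d(H/N)$ iteratively along the chain of kernels of projection maps yields $d(G) \leq \sum_i d(G_i)$. By induction each $d(G_i) \leq \max(2, \lfloor n_i/2 \rfloor)$, and a short case analysis (delicate only when orbits of sizes $1$, $2$, $3$ are present) shows the sum is at most $\max(2, \lfloor n/2\rfloor)$.

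Next, reduce to primitive groups. If $G$ is transitive but imprimitive, choose a maximal block system $\Delta$ of size $m$, with blocks of size $k$, so that $n = mk$. The quotient action gives $\bar G \leq S_m$ satisfying the inductive bound. The kernel $K$ of the quotient map is a subdirect product of subgroups of $\mathrm{Sym}(\text{block}) \cong S_k$, one per block, so the same subdirect-product argument as in the intransitive reduction bounds $d(K)$. Combining via $d(G) \leq d(K) + d(\bar G)$ and checking the arithmetic preserves the $\lfloor n/2\rfloor$ bound finishes this step.

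The heart of the matter is the primitive case, handled via the O'Nan--Scott theorem. A primitive group of degree $n$ is of one of five types: affine (with $n = p^d$), almost simple, diagonal, product (acting on $\Delta^\ell$), or twisted wreath. The key input, using CFSG, is that every non-abelian finite simple group $T$ is $2$-generated, and moreover every almost simple group with socle $T$ is generated by $2$ elements; since such a primitive group necessarily has $n \geq 5$, the bound $\max(2,\lfloor n/2\rfloor) \geq 2$ is satisfied. For affine type, $G \leq \mathrm{AGL}_d(p)$ with point stabilizer inside $\mathrm{GL}_d(p)$, and one bounds $d(\mathrm{GL}_d(p))$ against $\lfloor p^d/2\rfloor$ using elementary linear algebra. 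The diagonal, product, and twisted wreath cases unfold by their recursive structure into smaller almost simple or primitive constituents, to which the inductive hypothesis applies.

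The main obstacle is obtaining the \emph{sharp} constant $1/2$ in the bound. A weaker estimate of the form $O(n/\log n)$ can be had from the fact that every minimal generating set of a finite group has size $O(\log|G|)$ combined with crude bounds on primitive group orders; but the extremal examples --- $\lfloor n/2\rfloor$ disjoint transpositions generating $(\mathbb{Z}/2)^{\lfloor n/2\rfloor}$ --- force the constant to be exactly $1/2$, and pushing the inductive estimates through without loss in any of the O'Nan--Scott classes (especially affine and product type for small degrees) is the delicate part of McIver and Neumann's argument, necessarily leaning on CFSG via detailed knowledge of generator numbers of almost simple groups.
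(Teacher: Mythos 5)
First, note that the paper does not actually prove this statement: it is quoted directly from McIver and Neumann \cite{McIverNeumann1987}, so there is no in-paper argument to compare against. Your outline correctly identifies the architecture of the known proof (reduce intransitive groups to transitive ones, transitive to primitive, then invoke the O'Nan--Scott theorem and CFSG, with $\lfloor n/2\rfloor$ disjoint transpositions witnessing sharpness), and the paper itself confirms that CFSG is involved.

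As a proof, however, the proposal has a genuine gap in both reduction steps: the additive bounds you rely on are too lossy to deliver the constant $1/2$. In the intransitive reduction, combining $d(G)\le\sum_i d(G_i)$ with $d(G_i)\le\max(2,\lfloor n_i/2\rfloor)$ gives, for $k$ orbits of size $3$, the estimate $2k$, whereas the target is $\lfloor 3k/2\rfloor<2k$ for every $k\ge 1$ (already for $G=S_3\times S_3\le S_6$ your sum gives $4$ against the claimed bound $3$). So no ``short case analysis'' of the sum can close this: the sum genuinely exceeds the bound, and the theorem survives only because the rank of a subdirect product is typically far smaller than the sum of the ranks of its projections. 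The same failure occurs in the imprimitive reduction: with $m$ blocks of even size $k$ the kernel estimate alone already gives $m\lfloor k/2\rfloor=\lfloor mk/2\rfloor$, so adding $d(\bar G)\ge 1$ overshoots. The actual McIver--Neumann argument therefore cannot proceed by summing bounds over orbits or blocks; it requires a genuinely sharper treatment of subdirect products (reusing generators across factors, or a stronger inductive statement for transitive constituents). Without that ingredient the skeleton you describe does not assemble into a proof.
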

A similar result for subsemigroups of the full transformation semigroup would be of
interest. In general this still seems like a difficult problem:

\begin{openproblem}
Determine a formula for $\max\{ \rank{S} : S \leq T_n \}$.
\end{openproblem}

If we add a number of hypotheses we are able to 
apply Theorem~\ref{rankreesnonregular} to
obtain a positive
result of the the above type.
%\
%
%
%\
\begin{thm}\label{thm_NeumTn}
Let $n \geq 4$ and let $1 < r < n$. Every regular subsemigroup of $T_n$ that is generated
by mappings all with rank $r$, and has a unique maximal $\gj$-class, is generated by at
most $S(n,r)$ elements, where $S(n,r)$ denotes the Stirling number of the second kind. 
\end{thm}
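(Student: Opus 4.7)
The plan is to reduce $\rank{S}$ to the rank of the principal factor of the unique maximal $\gj$-class $J$, and then apply Theorem~\ref{rankreesnonregular}. Since every generator has rank $r$ and products cannot increase rank, $J$ consists precisely of the rank-$r$ elements of $S$, and every generator of $S$ lies in $J$. Hence $\lb J \rb = S$, and any generating set for $J^*$ (being a subset of $J$) also generates $S$; combined with $(\dagger)$ this yields $\rank{S} = \rank{J^*}$. Since $S$ is regular, $J$ contains idempotents, so $J^*$ is completely $0$-simple and may be written as $J^* \cong \zrms{G}{I}{\Lambda}{P}$ with $P$ regular.

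The next step is to identify the Rees matrix parameters $|I|$, $|\Lambda|$, $G$ with the natural combinatorial quantities attached to rank-$r$ mappings on $n$ points. The key structural point is that the restrictions of $\gr_S$ and $\gl_S$ to $J$ coincide with those of $\gr_{T_n}$ and $\gl_{T_n}$. For $\gr$, suppose $\alpha, \beta \in J$ have $\ker \alpha = \ker \beta$. Regularity supplies $\alpha' \in S$ with $\alpha \alpha' \alpha = \alpha$; then $e = \alpha \alpha' \in E(S)$ has $\ker e = \ker \alpha = \ker \beta$ and image a transversal $A$ of $\ker \beta$. Since $\beta|_A$ is injective and $e(x)$ lies in the same $\ker \beta$-class as $x$, a direct computation gives $e\beta = \beta$, so $\beta \in \alpha S^1$; by symmetry $\alpha \in \beta S^1$, hence $\alpha \mathrel{\gr_S} \beta$. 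The $\gl$-side argument is analogous. Consequently $|I|$ equals the number of kernels of elements of $J$, so $|I| \leq S(n,r)$, and $|\Lambda|$ equals the number of images, so $|\Lambda| \leq \binom{n}{r}$. Moreover, each group $\gh$-class of $J$ is a finite subsemigroup of the corresponding group $\gh$-class of $T_n$ (which is isomorphic to $S_r$), and is therefore a subgroup of $S_r$; hence $G \leq S_r$.

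Applying Theorem~\ref{rankreesnonregular} to $J^*$ in Graham normal form (with $I' = \Lambda' = \varnothing$ because $J^*$ is $0$-simple) gives
\[
\rank{S} = \rank{J^*} = \max\bigl(|I|,\; |\Lambda|,\; \sigma_{\min} + n' - 1\bigr),
\]
where $n'$ is the number of connected components of the Graham--Houghton graph. The first two terms are bounded by $S(n,r)$, using the elementary inequality $\binom{n}{r} \leq S(n,r)$ valid for $r \geq 2$ and $n \geq r$. For the third, every connected component of the bipartite graph contains at least one $\Lambda$-vertex, so $n' \leq |\Lambda| \leq \binom{n}{r}$; combined with $\sigma_{\min} \leq \rank{G} \leq \max(2, \lfloor r/2 \rfloor)$ (Theorem~\ref{cameron} applied to $G \leq S_r$) this yields
\[
\sigma_{\min} + n' - 1 \;\leq\; \max(2, \lfloor r/2 \rfloor) + \binom{n}{r} - 1.
\]
The proof concludes by verifying the inequality $\max(2,\lfloor r/2 \rfloor) + \binom{n}{r} - 1 \leq S(n,r)$ for all $n \geq 4$ and $2 \leq r \leq n-1$. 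This is a short arithmetic check: at the boundary $n = r+1$ one has $S(r+1,r) - \binom{r+1}{r} = (r+1)(r-2)/2$, which already dominates $\lfloor r/2 \rfloor - 1$ for $r \geq 2$; and for larger $n$ the gap $S(n,r) - \binom{n}{r}$ grows exponentially in $n$ while the required slack does not grow at all. The case $n = 4$, $r = 2$ is the tightest (equality holds), which is consistent with the known rank of $T_n^{(r)}$ from \cite{MR92c:20123}.

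The main obstacle is the structural step of the second paragraph: without regularity of $S$, distinct $\gr_S$-classes in $J$ could share a common kernel, and the bound $|I| \leq S(n,r)$ would fail. Regularity enters precisely through the idempotent $e = \alpha \alpha'$ and the identity $e\beta = \beta$, which forces the $\gr_S$-classes of $\alpha$ and $\beta$ to coincide whenever their kernels do.
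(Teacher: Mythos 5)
Your proof is correct and follows essentially the same route as the paper: reduce to the principal factor of the unique maximal $\gj$-class, bound $|I|$, $|\Lambda|$, the number of connected components, and $\rank{G}$ by $S(n,r)$, $\binom{n}{r}$, $\binom{n}{r}$ and $\max(2,\lfloor r/2\rfloor)$ respectively, and apply Theorem~\ref{rankreesnonregular}. You supply more detail than the paper on why Green's relations on $J$ are inherited from $T_n$ (the paper takes this for granted), and you correctly observe that at $n=4$, $r=2$ the final arithmetic comparison is an equality rather than the strict inequality asserted in the paper's proof, which does not affect the conclusion.
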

\begin{proof}
Let $S$ be a regular subsemigroup of $T_n$ generated by mappings of rank $r$ and with a
unique maximal $\gj$-class. 
Recall that the $\gd$-class $D_r$ of $T_n$ of all elements of rank $r$ has ${n \choose r}$ $\gl$-classes and $S(n,r)$ $\gr$-classes. Let $J_M$ be the unique maximal $\gj$-class of $S$. 
Since $S$ is a regular subsemigroup of $T_n$ generated by maps of rank $r$ and with unique maximal $\gj$-class $J_M$, it follows that 
\[
J_M = \{ \alpha \in S : |\im \alpha| = r \}. 
\]
Then
$\rank{S} = \rank{{J_M}^*}$ where the principal factor ${J_M}^*$ is isomorphic to a
completely $0$-simple $\zrms{G}{I}{\Lambda}{P}$ where $G \leq S_r$ and the matrix $P$ has
at most ${n \choose r}$ connected components. By Theorem~\ref{cameron}, since $G \leq
S_r$, it follows that $\rank{G} \leq \max(2,\lfloor r/2 \rfloor)$. Then by Theorem~\ref{rankreesnonregular}
\[
\rank{S} \leq \max(S(n,r), { n \choose r }, \max(2,\lfloor r/2 \rfloor)+  {n \choose r} - 1).
\]
Since $S(n,r) > {n \choose r}$ for $1 < r < n$ we obtain
\[
\max(S(n,r), { n \choose r }, \max(2,\lfloor r/2 \rfloor) +  {n \choose r} - 1) = \max(S(n,r),
\max(2,\lfloor r/2 \rfloor) +  {n \choose r} - 1).
\]
Also, for $n \geq 4$ and $1<r<n$ we have:
\[
S(n,r) > \max(2,\lfloor r/2 \rfloor) +  {n \choose r} - 1.
\]
We conclude that $\rank{S} \leq S(n,r)$.
%%
 %
\begin{comment}
This is because for every $r$-subset of $X_n$ we can associate the ``clock'' partition as
described in Definition~\ref{clockpartition}. In addition to these ${n \choose r}$
partitions of weight $r$ there also exists, for each set $s \in \{ \{ 1,3 \}, \{ 1,4 \},
\ldots, \{1, n-2 \} \}$, a partition with $s$ as one of the classes.  There are at least
$n-3$ of these and $n-3 > \lfloor r/2 \rfloor -1$ for $n \geq 4$ and $1<r<n$. It follows
that:
\[
S(n,r) \geq (n-3) + {n \choose r} > \lfloor r/2 \rfloor + {n \choose r} - 1
\]
as required.
\end{comment}
 %
%%
\end{proof}
This result is best possible in the sense that for $1 < r < n$ this upper bound is attained by the subsemigroup 
\[
K(n,r) = \{ \alpha \in T_n : |\im \alpha| \leq r \},
\]
which satisfies all of the hypotheses of the theorem, and satisfies
$\rank{K(n,r)} = S(n,r)$; see \cite{key-13}. 
It is natural to ask whether one can remove the hypothesis that the subsemigroup must have a unique maximal $\gj$-class. 
%\
%\
\begin{openproblem}
Let $n \geq 4$ and $1 < r < n$. Is it true that any regular subsemigroup of $T_n$ that is
generated by mappings of rank $r$ is generated by at most $S(n,r)$ elements?
\end{openproblem}

\subsection{Inverse semigroups of transformations}

For inverse semigroups, the analogue of Cayley's theorem is the Wagner--Preston Theorem which says that any (finite) inverse semigroup can be embedded in a (finite) symmetric inverse semigroup. Here we see that the  McIver--Neumann admits a generalisation to subsemigroups of the symmetric inverse semigroup generated by maps of fixed rank (the McIver--Neumann result is the special case when $r=n$).

\begin{thm}
Let $S$ be a subsemigroup of $I_n$ generated by a set of maps all of rank $r$ for some $1 < r \leq n$. If $S$ is an inverse semigroup then
\[
\rank{S} \leq {n \choose r} \max\left(2,\left\lfloor \frac{r}{2} \right\rfloor\right). 
\]
%
%
%\
%
\end{thm}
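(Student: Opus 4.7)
The plan is to decompose $S$ into its maximal $\mathcal{J}$-classes, apply the Gomes--Howie rank formula for Brandt semigroups to each principal factor, and bound the contribution of each maximal subgroup using Theorem~\ref{cameron}.

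First I would observe that since every generator of $S$ has rank $r$ and rank is non-increasing under multiplication in $I_n$, every element of $S$ has rank at most $r$, and the rank-$r$ elements of $S$ are precisely the union of the maximal $\mathcal{J}$-classes $J_1, \ldots, J_m$ of $S$. These all lie inside the $\mathcal{D}$-class $D_r$ of $I_n$, and since the generators of $S$ lie in $\bigcup_i J_i$, this union generates $S$. A short auxiliary observation (noting that once a partial product has rank less than $r$ it cannot return to rank $r$) shows that if $s \in J_i$ is expressed as a product of generators then every factor, and every partial product, already lies in $J_i$.

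Next, because $S$ is inverse, each principal factor $J_i^*$ is a Brandt semigroup over a group $G_i$, and $G_i$ embeds in the maximal subgroup $S_r$ of $D_r$ inside $I_n$. Letting $k_i$ denote the number of $\mathcal{R}$-classes of $J_i$, the Gomes--Howie formula yields $\rank{J_i^*} = \rank{G_i} + k_i - 1$, while Theorem~\ref{cameron} gives $\rank{G_i} \leq c := \max(2, \lfloor r/2 \rfloor)$ (which remains $\geq 2$ even when $r \in \{2,3\}$). Because each $\mathcal{R}$-class of the Brandt semigroup $J_i$ contains a unique idempotent, and because the idempotents of $D_r$ in $I_n$ are precisely the $\binom{n}{r}$ partial identities on $r$-subsets of $\{1, \ldots, n\}$, summing over $i$ yields $\sum_{i=1}^{m} k_i \leq \binom{n}{r}$.

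Finally, choosing for each $i$ a minimum-size generating set $A_i$ of $J_i^*$, the products of elements of $A_i$ taken inside $S$ either remain in $J_i$ or drop rank (corresponding to the zero of $J_i^*$), so $\langle A_i \rangle \supseteq J_i$ in $S$; hence $\bigcup_i A_i$ generates $\bigcup_i J_i$ and therefore $S$. Putting everything together,
\[
\rank{S} \;\leq\; \sum_{i=1}^{m} \rank{J_i^*} \;\leq\; m(c-1) + \sum_{i=1}^{m} k_i \;\leq\; c \sum_{i=1}^{m} k_i \;\leq\; \binom{n}{r} c,
\]
where the third inequality uses $c \geq 2$ together with $m \leq \sum_i k_i$. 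This is precisely the stated bound, and it specialises correctly to Theorem~\ref{cameron} when $r = n$. The one point demanding real care is checking that a minimum-size generating set for the principal factor $J_i^*$ really does generate $J_i$ as a subset of $S$ (so that the $A_i$ can be glued); everything else is routine combinatorial bookkeeping and there is no genuine obstacle.
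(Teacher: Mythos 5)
Your proposal is correct and follows essentially the same route as the paper: decompose into maximal $\mathcal{J}$-classes, note that $S$ inverse forces each principal factor to be a Brandt semigroup, apply the Gomes--Howie formula $\rank{J_i^*}=\rank{G_i}+k_i-1$ together with the McIver--Neumann bound on $\rank{G_i}$, and use the count of $\binom{n}{r}$ rank-$r$ idempotents in $I_n$ to bound $\sum_i k_i$. The only cosmetic differences are that you settle for the inequality $\rank{S}\leq\sum_i\rank{J_i^*}$ (the paper asserts equality, but only the inequality is needed) and you combine the final estimates via $m\leq\sum_i k_i$ rather than bounding $m$ and $\sum_i k_i$ by $\binom{n}{r}$ separately.
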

\begin{proof}
Let $J_1, \ldots, J_k$ be the maximal $\gj$-classes of $S$. As in the proof of Theorem~\ref{thm_NeumTn} above, since $S$ is a regular subsemigroup and is generated by maps of rank $r$ it follows that
\[
\bigcup_{i=1}^k J_i = \{ \alpha \in S : |\im \alpha| = r\}.
\] 
For $i=1,\ldots, k$, let $n_i$ be the number of $\gr$-classes in $J_i$, which also equals the number of $\gl$-classes in $J_i$ and the number of idempotents in $J_i$ (since $S$ is inverse). Since $S$ is generated by a set of maps of rank $r$ it now follows that
\[
\rank{S} = \sum_{i=1}^k \rank{J_i^*}
\]
where each $J_i^*$ is a finite completely $0$-simple inverse semigroup (i.e. is a Brandt semigroup). Let $G_i$ denote the maximal subgroup of this Rees matrix semigroup $J_i^*$. Since each $G_i$ is isomorphic to a subgroup of $S_r$ it follows from Theorem~\ref{cameron} that $\rank{G_i} \leq \max\left(2,\left\lfloor \frac{r}{2} \right\rfloor\right)$ for all $i=1,\ldots,k$. Since there are exactly ${n \choose r}$ idempotents $\epsilon$ in $I_n$ with $| \im \epsilon | = r$, it follows that $n_1 + \ldots + n_k \leq {n \choose r}$ and of course also that $k \leq {n \choose r}$. Combining these observations with the rank formula from Theorem~\ref{rankreesnonregular} then gives:
\begin{eqnarray*}
\rank{S} & = & \sum_{i=1}^k \rank{J_i^*} = 
\sum_{i=1}^k \max(n_i, \rank{G_i} + n_i -1) \\
& = & \sum_{i=1}^k (\rank{G_i} + n_i -1) 
 \leq  \sum_{i=1}^k (\max\left(2,\left\lfloor \frac{r}{2} \right\rfloor\right) + n_i -1) \\
& = & k(\max\left(2,\left\lfloor \frac{r}{2} \right\rfloor\right) -1) + (n_1 + \ldots + n_k) \\
& \leq & {n \choose r}(\max\left(2,\left\lfloor \frac{r}{2} \right\rfloor\right) -1) + {n \choose r} 
 =  {n \choose r}\max\left(2,\left\lfloor \frac{r}{2} \right\rfloor\right),
\end{eqnarray*}
as required.
\end{proof}
This result is best possible for $2 < r \leq n$. Indeed, given $r,n \in \mathbb{N}$ with $3 < r \leq n$ the bound is attained by 
taking, for each of the ${n \choose r}$ distinct copies of $S_r$ in the $\gd$-class $D_r$ of $I_n$ (of all maps of rank $r$) a set of $\lfloor \frac{r}{2} \rfloor$ disjoint transpositions, and setting $S$ to be the subsemigroup generated by these ${n \choose r}\lfloor \frac{r}{2} \rfloor$ elements (in the special case $r=3$ we take all copies of $S_3$ as our inverse generating set to attain the bound). The following natural problem remains open.

\begin{openproblem}
Find $\max \{ \rank{S} : S \leq I_n \}$ where $S$ is an inverse subsemigroup of $I_n$.
\end{openproblem}

A related alternative line of investigation for transformation semigroups is that of random generation. There are numerous interesting results about random generation of finite groups. In 1969, John Dixon \cite{Dixon69} proved that the probability that two random permutations in the symmetric group $S_n$ generate $S_n$ or $A_n$ tends to $1$ as $n \rightarrow \infty$. This was extended in \cite{KL1990} and \cite{LS1995} to arbitrary sequences of non-abelian finite simple groups.
In a quite amazing recent paper in this area \cite{ZP2011} Jaikin-Zapirain and  Pyber find an explicit formula  for the number of random elements needed to generate a finite $d$-generated group $G$ with high probability. As a corollary they prove that if $G$ is a $d$-generated linear group of dimension $n$ then $cd + \mathrm{log} \ n$ random generators suffice, for some absolute constant $c$.

Similar questions may be asked for finite semigroups. Cameron \cite{Cameron2013} recently began such an investigation for the full transformation monoid $T_n$. As he points out, one has to be careful to ask the right questions here, and one should not expect the obvious analogue of Dixon's result to hold true. Indeed, $T_n$ requires three generators, and any generating set of minimal size for $T_n$ has the form $\{\alpha, \beta, \gamma\}$ where $\langle \alpha, \beta \rangle = S_n$ and $|\im \gamma| = n-1$. In general, if a monoid $M$ is generated by a set of transformations $A$, then the group of permutations in $M$ is generated by the permutations in $A$. Since permutations are exponentially scarce in $T_n$, we have to choose a huge number of random elements in order to generate $T_n$ with high probability. Cameron suggest a different approach, leading him to conjecture that the probability that two random elements in $T_n$ generate a synchronising monoid (one that contains a constant mapping) tends to $1$ as $n \rightarrow \infty$.

\section*{Acknowledgements}

Some of the results given here appear first appeared in the PhD thesis of the author \cite{GrayThesis}.

%This research was carried out while the author was a research student at the University of St Andrews. This paper is drawn from results in [9, Chap- ter 3]. The author expresses his gratitude to Professor Nikola Ruÿskuc for his helpful comments during the preparation of this paper.
%Some of the results presented here appeared in the thesis of the author (cite my thesis) supported by EPSRC??  

%\bibliography{MasterBibliography}
%\bibliographystyle{alphaabbrv}

\def\cprime{$'$}

\end{document}